\newcommand{\wa}{{ w^{(a)} }}
\newcommand{\iprev}{{I^{(a-1)}}}
\newcommand{\eij}{e_{ij}}
\newcommand{\eip}{e_{ip}}
\newcommand{\eiq}{e_{iq}}
\newcommand{\ejq}{e_{jq}}
\newcommand{\epi}{e_{pi}}
\newcommand{\epj}{e_{pj}}
\newcommand{\epq}{e_{pq}}
\newcommand{\eqj}{e_{qj}}
\newcommand{\eir}{e_{ir}}
\newcommand{\epr}{e_{pr}}
\newcommand{\eqr}{e_{qr}}
\newcommand{\erj}{e_{rj}}
\newcommand{\mij}{m_{ij}}
\newcommand{\miq}{m_{iq}}
\newcommand{\mpi}{m_{pi}}
\newcommand{\mpj}{m_{pj}}
\newcommand{\mpq}{m_{pq}}
\newcommand{\mpr}{m_{pr}}
\newcommand{\mrj}{m_{rj}}
\newcommand{\catc}{\mathcal{C}}
\newcommand{\one}{\mathbf{1}}
\newcommand{\lex}[1]{{ \,\underset{\mathrm{lex}}{#1}\, }}
\newcommand{\rlex}[1]{{ \,\underset{\mathrm{rlex}}{#1}\, }}
\newcommand{\const}{(\text{const.})}
\newcommand{\surj}{\twoheadrightarrow}
\newcommand{\mor}{\rightarrow}
\newcommand{\schub}{\mathfrak{S}}
\newcommand{\smod}{\mathcal{S}}
\newcommand{\der}{\partial}
\newcommand{\borel}{\mathfrak{b}}
\newcommand{\nplus}{\mathfrak{n}^+}
\newcommand{\csa}{\mathfrak{h}}
\newcommand{\ualg}{\mathcal{U}}
\newcommand{\sgn}{\mathrm{sgn}}
\newcommand{\inv}{\mathrm{code}}
\newcommand{\perm}{\mathrm{perm}}
\newcommand{\ch}{\mathrm{ch}}
\newcommand{\Ext}{\mathrm{Ext}}
\newcommand{\Hom}{\mathrm{Hom}}
\newcommand{\NN}{\ZZ_{>0}}
\newcommand{\nonneg}{\ZZ_{\geq 0}}
\newcommand{\ZZ}{\mathbb{Z}}
\newcommand{\kap}{\kappa}
\newcommand{\lmb}{\lambda}
\newcommand{\Lmb}{\Lambda}
\newcommand{\Ker}{\mathrm{Ker}}
\newcommand{\rtgeq}{{\;\underline{\triangleright}\;}}
\newcommand{\rtngeq}{{\;\not{\underline{\triangleright}}\;}}
\newtheorem{lem}{Lemma}[section]
\newtheorem{prop}[lem]{Proposition}
\newtheorem{thm}[lem]{Theorem}
\newtheorem{cor}[lem]{Corollary}
\newtheorem{Q}[lem]{Question}
\theoremstyle{definition}
\newtheorem{rem}[lem]{Remark}
\newtheorem{eg}[lem]{Example}
\title{An approach toward Schubert positivities of polynomials using Kra\'skiewicz-Pragacz modules}
\author{
Masaki Watanabe \\ 
Graduate School of Mathematical Sciences, The University of Tokyo, \\
3-8-1 Komaba Meguro-ku Tokyo 153-8914, Japan \\ \texttt{mwata@ms.u-tokyo.ac.jp}}
\date{\empty}
\begin{document}
\maketitle

\begin{center}
\textbf{Abstract}
\end{center}
\begin{center}
\begin{minipage}{0.8\textwidth}
In this paper, we investigate properties of modules introduced by Kra\'skiewicz and Pragacz which realize Schubert polynomials as their characters. 
In particular, we give some characterizations of modules having a filtration by Kra\'skiewicz-Pragacz modules. 
In finding criteria for filtrations, we calculate generating sets for the annihilator ideals of the lowest vectors in Kra\'skiewicz-Pragacz modules, and derive a projectivity result concerning Kra\'skiewicz-Pragacz modules. 
\end{minipage}

\vspace{2ex}
\noindent\textbf{Keywords: }Schubert polynomials, Schubert functors, Kra\'skiewicz-Pragacz modules
\end{center}
\section{Introduction}
Though Schubert polynomials originally arose from the cohomology ring of flag varieties, 
they also have purely combinatorial interests apart from the geometry of flag varieties. 
Since Schubert polynomials are a kind of generalizations of Schur functions, 
it is an interesting problem to investigate analogues of several positivity properties of Schur functions for Schubert polynomials. 
For example, it is a classical result that $\schub_u\schub_v$ is a positive sum of Schubert polynomials, 
which is usually proved using the cohomology ring of flag varieties. 
Another such problem 
is a Schubert-positivity question for the ``plethysm'' of a Schur function with a Schubert polynomial. 
For a symmetric function $s$ and a polynomial $f=x^\alpha+x^\beta+\cdots$, 
the plethysm of $s$ and $f$ is defined as $s[f]=s(x^\alpha, x^\beta, \ldots)$ (cf. \cite[\S I.8]{Mac2}). 
The question is: is $s_\sigma[\schub_w]$ a positive sum of Schubert polynomials, for all partitions $\sigma$ and permutations $w$?
In this paper, motivated by such positivity problems on Schubert polynomials, we provide some new results on the modules related with Schubert polynomials introduced by Kra\'skiewicz and Pragacz (\cite{KP0}, \cite{KP}). 

For a permutation $w$, Kra\'skiewicz and Pragacz defined a certain
representation $\smod_w$ of the Lie algebra $\borel$ of all upper triangular matrices
such that its character with respect to the subalgebra $\csa$ of all diagonal matrices
is equal to the Schubert polynomial $\schub_w$
(precise definition of $\smod_w$ will be given in the section \ref{schubmod}). 
In this paper we call these modules \textit{Kra\'skiewicz-Pragacz modules} or \textit{KP modules}. 

Since the characters of KP modules are Schubert polynomials, the problems concerning Schubert positivity are deeply related to the class of modules 
having a filtration by KP modules. 
For instance, the Schubert positivity of $\schub_u\schub_v$ and $s_\sigma[\schub_w]$ 
will follow if one shows that $\smod_u \otimes \smod_v$ and $s_\sigma(\smod_w)$ (here $s_\sigma$ denote the Schur functor), respectively, 
have such filtrations.  

KP modules are in some way similar to Demazure modules (of type A), 
the modules generated by an extremal vector in an irreducible representation of $\mathfrak{gl}_n$: 
they are both cyclic $\borel$-modules parametrized by the weight of the generators, and if the index permutation is 2143-avoiding then the KP module coincide with the Demazure modules with the same weight of the generator (note that in general they are different (see Example \ref{eg2143}): if a permutation $w$ does not avoid 2143 then there exists a \textit{strict} surjection from $\smod_w$ to the Demazure module of corresponding lowest weight). 
In this paper, we develop an analog of the theory on Demazure modules (\cite{Jos}, \cite{P}, \cite{vdK_orig}, \cite[\S 3]{vdK}) in the case of KP modules to obtain characterizations of modules having filtrations by KP modules. 

The module $\smod_w$ is generated by its lowest weight vector $u_w$. 
In this paper we first show in Section \ref{presen} that the annihilator ideal $\mathrm{Ann}_{\ualg(\nplus)}(u_w)$, 
where $\nplus$ is the Lie subalgebra of all strictly upper triangular matrices, 
is generated by the elements $\eij^{\mij(w)+1}$ ($1 \leq i < j \leq n$)
for some integers $\mij(w)$ which can be read off from $w$, 
where $e_{ij}$ denotes the $(i,j)$-th matrix unit. 
This result can be seen as a generalization of a classical result which states that
the finite dimensional irreducible representation of $\mathfrak{gl_n}$ with lowest weight $-\lmb$
can be presented as $\ualg(\nplus)/\langle e_i^{\langle \lmb,h_i \rangle+1} \rangle_{1 \leq i \leq n-1}$ as a $\ualg(\nplus)$-module. 
This result can moreover be seen as an analog of the result on Demazure modules, given by Joseph (\cite[Theorem 3.4]{Jos}), 
which states, in the $\mathfrak{gl}_n$-case, that the annihilator of the generator of the Demazure module with lowest weight $\lmb \in \ZZ^n$ 
is generated by the elements $e_{ij}^{1+\max\{0,\lmb_j-\lmb_i\}}$ $(1 \leq i < j \leq n)$. 

Using this presentation of KP modules, in section \ref{projectivity} we characterize KP modules by their projectivity in certain categories;
it is an analogue of Polo's theorem (originally for Demazure modules: see \cite{P}, \cite[\S 3]{vdK}) in the case of KP modules. 
Finally, using the results obtained so far, 
we obtain some criteria (Theorem \ref{filtr_thm}, Theorem \ref{filtr_char}) for a module to have a filtration by KP modules, 
in a way similar to the argument given by van der Kallen (\cite{vdK_orig}, \cite[\S 3]{vdK}) for Demazure modules using the method from the theory of highest-weight categories. 

The paper is organized as follows. 
In Sections \ref{prelimi} and \ref{schubmod} we recall and define some basic notations and results about Schubert polynomials and KP modules. 
In Sections \ref{presen} and \ref{prf_of_mainlem} we give a generating set for the annihilator ideal of the lowest weight vector in a KP module. 
In Section \ref{projectivity}, we introduce a new ordering on the weight lattice and show some results relating KP modules with this ordering. 
In Sections \ref{higherext} and \ref{filtr}, we obtain some characterizations of modules having a filtration by KP modules, using the results of the previous sections. 
Section \ref{questions} serves as a concluding remark by stating some future problems. 

\noindent\textbf{Acknowledgement.}
I would like to thank Katsuyuki Naoi for giving the author information on related materials. 

\section{Preliminaries}
\label{prelimi}
Let $\NN$ be the set of all positive integers and let $\nonneg$ be the set of all nonnegative integers. 
A \textit{permutation} $w$ is a bijection from $\NN$ to itself which fixes all but finitely many points. 
Let $S_\infty$ denote the group of all permutations. 
For a positive integer $n$, 
let $S_n = \{w \in S_\infty : \text{$w(i)=i$ ($i>n$)}\}$
and $S_\infty^{(n)} = \{w \in S_\infty : w(n+1)<w(n+2)<\cdots \}$. 
We sometimes write a permutation in its one-line form: i.e., write $[w(1) \, w(2) \, \cdots]$ to mean $w \in S_\infty$. 
If $w \in S_n$, we may write $[w(1) \, w(2) \, \cdots \, w(n)]$ to mean $w$. 
For $i < j$, let $t_{ij}$ denote the permutation which exchanges $i$ and $j$ and fixes all other points. 
Let $s_i=t_{i,i+1}$. 
The \textit{inversion diagram} of $w \in S_\infty$ is defined as $I(w)=\{(i,j) : i<j, w(i)>w(j)\}$. 
Let $\ell(w)=|I(w)|$ and $\sgn(w)=(-1)^{\ell(w)}$. 
For $w \in S_\infty^{(n)}$, we define $\inv(w)=(\inv(w)_1, \ldots, \inv(w)_n) \in \ZZ_{\geq 0}^n$ by $\inv(w)_i=\#\{j : i<j, w(i)>w(j)\}$: this is usually called the \textit{Lehmer code} of $w$ and it uniquely determines $w$. If $\lmb = \inv(w)$ we write $w = \perm(\lmb)$.

For a polynomial $f=f(x_1, x_2, \ldots)$ and $i \in \NN$, we define $\der_if=\frac{f-s_if}{x_i-x_{i+1}}$. 
For $w \in S_\infty$ we can assign its \textit{Schubert polynomial} $\schub_w \in \ZZ[x_1, x_2, \ldots]$, which is recursively defined by 
\begin{itemize}
\item $\schub_{w}=x_1^{m-1}x_2^{m-2} \cdots x_{m-1}$ if $w=w_0(m)=[m \; m-1 \; \cdots \; 1]$ for some $m$, and
\item $\schub_{ws_i}=\der_i\schub_w$ if $\ell(ws_i)<\ell(w)$. 
\end{itemize}
We note the fact (see eg. \cite{Mac}) that if $w \in S_n$ (resp. $S_\infty^{(n)}$)
then $\schub_w$ is a linear combination of $x_1^{a_1} \cdots x_n^{a_n}$ with $a_i \in \{0, \ldots, n-i\}$
(resp. a polynomial in $x_1, \ldots, x_n$).

Schubert polynomials satisfy the following identity known as \textit{transition}: 
\begin{prop}[{{\cite[(4.16)]{Mac}}}]
Let $w \in S_\infty \smallsetminus \{\mathrm{id}\}$. 
Let $j \in \NN$ be the maximal integer such that $w(j)>w(j+1)$ and take $k>j$ maximal with $w(j)>w(k)$. 
Let $v=wt_{jk}$. 
Let $i_1<\cdots<i_A$ be the all integers less than $j$ such that $\ell(vt_{i_aj})=\ell(v)+1$, and let $\wa=vt_{i_aj}$. 
Then
\[
\schub_w=x_j\schub_v+\sum_{a=1}^A \schub_{\wa}. 
\]
\label{transition}
\end{prop}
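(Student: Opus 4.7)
\noindent\textit{Proof proposal.} The plan is to derive the identity from Monk's formula, which asserts that for any $p \in \NN$ and $v \in S_\infty$,
\[
(x_1 + \cdots + x_p)\schub_v \;=\; \sum_{\substack{a \leq p < b \\ \ell(vt_{ab}) = \ell(v)+1}} \schub_{vt_{ab}},
\]
a classical result (cf.\ \cite{Mac}) which I would invoke as a black box. First I would apply it with $p = j$ and with $p = j-1$ and subtract: pairs $(a,b)$ with $a \leq j-1$ and $b \geq j+1$ appear in both identities and cancel, so the only survivors are the pairs $(j,b)$ with $b > j$ from the $p=j$ identity and the pairs $(a,j)$ with $a < j$ from the $p=j-1$ identity, yielding
\[
x_j \schub_v \;=\; \sum_{\substack{b > j \\ \ell(vt_{jb}) = \ell(v)+1}} \schub_{vt_{jb}} \;-\; \sum_{\substack{a < j \\ \ell(vt_{aj}) = \ell(v)+1}} \schub_{vt_{aj}}.
\]
Since the latter sum is exactly $\sum_{a=1}^A \schub_{\wa}$, the transition formula reduces to the single claim that the former sum equals $\schub_w$, i.e., that the only $b > j$ with $\ell(vt_{jb}) = \ell(v)+1$ is $b = k$.

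Next I would prove this combinatorial claim, exploiting the maximality in the definitions of $j$ and $k$. Since $j$ is the last descent of $w$, the values $w(j+1) < w(j+2) < \cdots$ are strictly increasing; combined with the maximality of $k$, passing from $w$ to $v = wt_{jk}$ yields
\begin{align*}
v(c) &= w(c) < w(k) = v(j) \quad \text{for } j < c < k,\\
v(c) &= w(c) > w(j) = v(k) \quad \text{for } c > k.
\end{align*}
Unpacking $\ell(vt_{jb}) = \ell(v)+1$ into the two standard conditions (i) $v(j) < v(b)$ and (ii) there is no $j < c < b$ with $v(j) < v(c) < v(b)$, a short case analysis on the position of $b$ relative to $k$ handles everything: for $j < b < k$ condition (i) fails outright; for $b = k$ both conditions hold; and for $b > k$ the intermediate index $c = k$ satisfies $v(j) < v(k) < v(b)$, violating (ii). Hence $b = k$ is the unique contributor, and the transition formula follows.

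The only genuinely content-bearing step is the combinatorial lemma in the second paragraph; the Monk subtraction itself is routine. The subtle point --- and the most likely source of error --- is that all three ingredients (the last-descent property of $j$, the maximality of $k$, and the consequent monotonicity of $w$ on indices exceeding $j$) must be combined, each ruling out a different failure mode in the case analysis. No induction on $\ell(w)$ is needed; the argument is one algebraic manipulation plus a permutation-inequality check.
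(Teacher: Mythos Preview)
Your proof is correct. Note, however, that the paper does not supply its own proof of this proposition: it is quoted as a known result from Macdonald's notes \cite[(4.16)]{Mac}, so there is no in-paper argument to compare against. For what it is worth, your approach --- subtracting two instances of Monk's formula and then checking combinatorially that $b=k$ is the sole survivor on the positive side --- is precisely the standard derivation, and is essentially how the transition formula is obtained in the cited reference.
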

Note that if $w \in S_\infty^{(n)}$, $v$ and $w^{(1)}, \ldots, w^{(A)}$ in the proposition above are also in $S_\infty^{(n)}$. Note also that $\inv(v)=\inv(w)-\epsilon_j$, where $\epsilon_j=(0, \ldots, 0, 1, 0, \ldots, 0)$ with $1$ at the $j$-th position. 
.

Hereafter in this paper, we fix a positive integer $n$. 
Let $K$ be a field of characteristic zero. 
Let $\borel$ be the Lie algebra of all $n \times n$ upper triangular $K$-matrices
and let $\csa \subset \borel$ be the subalgebra of all diagonal matrices. 
Let $\ualg(\borel)$ be the universal enveloping algebra of $\borel$. 
For a $\ualg(\borel)$-module $M$ and $\lmb = (\lmb_1, \ldots, \lmb_n) \in \ZZ^n$, 
let $M_\lmb = \{m \in M : hm=\langle \lmb,h \rangle m \;\text{($\forall h \in \csa$)}\}$ where $\langle \lmb,h \rangle = \sum \lmb_i h_i$. 
$M_\lmb$ is called the \textit{weight space of weight $\lmb$} or \textit{$\lmb$-weight space}, and elements of $M_\lmb$ are said to \textit{have weight $\lmb$}. 
If $M_\lmb \neq 0$ then $\lmb$ is said to be a \textit{weight} of $M$. 
If $M$ is the direct sum of its weight spaces and each weight space has finite dimension, then $M$ is said to be a \textit{weight module}
and we define $\ch(M)=\sum_{\lmb} \dim M_\lmb x^\lmb$ where $x^\lmb=x_1^{\lmb_1} \cdots x_n^{\lmb_n}$. 
For $1 \leq i<j \leq n$, let $\eij \in \borel$ be the matrix with $1$ at the $(i,j)$-position and all other coordinates $0$. 
It is easy to see that if $M$ is a $\ualg(\borel)$-module and $x \in M_\lmb$, 
then $\eij x \in M_{\lmb+\epsilon_i-\epsilon_j}$, 

For $\lmb \in \ZZ^n$, let $K_\lmb$ denote the one-dimensional $\ualg(\borel)$-module where $h \in \csa$ acts by $\langle \lmb,h \rangle$ and $e_{ij}$ acts by $0$. 
Note that every finite-dimensional weight module admits a filtration by these one dimensional modules. 

\section{Kra\'skiewicz-Pragacz modules}
\label{schubmod}
In \cite{KP0} and \cite{KP}, Kra\'skiewicz and Pragacz defined certain $\ualg(\borel)$-modules which we call here \textit{Kra\'skiewicz-Pragacz modules} or \textit{KP modules}. 
Here we use the following definition. 
Let $w \in S_\infty^{(n)}$. 
Let $K^n=\bigoplus_{1 \leq i \leq n} K u_i$ be the vector representation of $\borel$. 
For each $j \in \NN$, let $l_j = l_j(w) = \#\{i : (i,j) \in I(w)\}$, $\{i : (i,j) \in I(w)\}=\{i_1, \ldots, i_{l_j}\}$ ($i_1<\cdots<i_{l_j}$), 
and $u_w^{(j)}=u_{i_1} \wedge \cdots \wedge u_{i_{l_j}} \in \bigwedge^{l_j} K^n$. 
Note that $u_w^{(j)} \in \bigwedge^{l_j} K^{\min\{n,j-1\}}$. 
Let $u_w=u_w^{(1)} \otimes u_w^{(2)} \otimes \cdots \in \bigwedge^{l_1} K^n \otimes \bigwedge^{l_2} K^n \otimes \cdots$. 
Then the KP module $\smod_w$ associated to $w$ is defined as $\smod_w=\ualg(\borel)u_w$. 

\begin{rem}
It is also possible to define KP modules using so-called \textit{Rothe diagram} $D(w)=\{(i, w(j)) : i<j, w(i)>w(j)\}$ of $w$ instead of $I(w)$. Since $I(w)$ and $D(w)$ differ only by a rearrangement of columns it does not matter which to use. $D(w)$ has an advantage that it is easier to see with hand what the diagram looks like: drawing rays downward and to the right from the positions $(i, w(i))$ ($i=1, 2, \ldots$) and then the remaining boxes give $D(w)$ (see the figure below). Also, in \cite{FGRS} a basis for $\smod_w$ is constructed using certain labellings of Rothe diagram. 
\end{rem}

\begin{center}
{\unitlength 0.1in%
\begin{picture}( 28.0000, 10.9000)(  2.0000,-12.0000)%
%
\special{pn 8}%
\special{pa 310 110}%
\special{pa 490 110}%
\special{pa 490 290}%
\special{pa 310 290}%
\special{pa 310 110}%
\special{pa 490 110}%
\special{fp}%
%
\special{pn 8}%
\special{pa 310 310}%
\special{pa 490 310}%
\special{pa 490 490}%
\special{pa 310 490}%
\special{pa 310 310}%
\special{pa 490 310}%
\special{fp}%
%
\special{pn 8}%
\special{pa 510 310}%
\special{pa 690 310}%
\special{pa 690 490}%
\special{pa 510 490}%
\special{pa 510 310}%
\special{pa 690 310}%
\special{fp}%
%
\special{pn 8}%
\special{pa 710 310}%
\special{pa 890 310}%
\special{pa 890 490}%
\special{pa 710 490}%
\special{pa 710 310}%
\special{pa 890 310}%
\special{fp}%
%
\special{pn 8}%
\special{pa 710 710}%
\special{pa 890 710}%
\special{pa 890 890}%
\special{pa 710 890}%
\special{pa 710 710}%
\special{pa 890 710}%
\special{fp}%
%
\special{pn 8}%
\special{pa 1910 110}%
\special{pa 2090 110}%
\special{pa 2090 290}%
\special{pa 1910 290}%
\special{pa 1910 110}%
\special{pa 2090 110}%
\special{fp}%
%
\special{pn 8}%
\special{pa 1910 310}%
\special{pa 2090 310}%
\special{pa 2090 490}%
\special{pa 1910 490}%
\special{pa 1910 310}%
\special{pa 2090 310}%
\special{fp}%
%
\special{pn 8}%
\special{pa 2310 310}%
\special{pa 2490 310}%
\special{pa 2490 490}%
\special{pa 2310 490}%
\special{pa 2310 310}%
\special{pa 2490 310}%
\special{fp}%
%
\special{pn 8}%
\special{pa 2510 310}%
\special{pa 2690 310}%
\special{pa 2690 490}%
\special{pa 2510 490}%
\special{pa 2510 310}%
\special{pa 2690 310}%
\special{fp}%
%
\special{pn 8}%
\special{pa 2310 710}%
\special{pa 2490 710}%
\special{pa 2490 890}%
\special{pa 2310 890}%
\special{pa 2310 710}%
\special{pa 2490 710}%
\special{fp}%
%
\special{pn 8}%
\special{pa 2200 200}%
\special{pa 3000 200}%
\special{dt 0.045}%
\special{pa 2200 200}%
\special{pa 2200 1200}%
\special{dt 0.045}%
\special{pa 2800 400}%
\special{pa 3000 400}%
\special{dt 0.045}%
\special{pa 2800 400}%
\special{pa 2800 1200}%
\special{dt 0.045}%
\special{pa 2000 600}%
\special{pa 3000 600}%
\special{dt 0.045}%
\special{pa 2000 600}%
\special{pa 2000 1200}%
\special{dt 0.045}%
\special{pa 2600 800}%
\special{pa 3000 800}%
\special{dt 0.045}%
\special{pa 2600 800}%
\special{pa 2600 1200}%
\special{dt 0.045}%
\special{pa 2400 1000}%
\special{pa 3000 1000}%
\special{dt 0.045}%
\special{pa 2400 1000}%
\special{pa 2400 1200}%
\special{dt 0.045}%
%
\special{pn 4}%
\special{sh 1}%
\special{ar 200 200 8 8 0  6.28318530717959E+0000}%
\special{sh 1}%
\special{ar 600 200 8 8 0  6.28318530717959E+0000}%
\special{sh 1}%
\special{ar 800 200 8 8 0  6.28318530717959E+0000}%
\special{sh 1}%
\special{ar 800 600 8 8 0  6.28318530717959E+0000}%
\special{sh 1}%
\special{ar 600 600 8 8 0  6.28318530717959E+0000}%
\end{picture}}%

\noindent{\scriptsize Figure 1: inversion diagram and Rothe diagram of the same permutation $[25143]$. }
\end{center}

KP modules have the following property: 
\begin{thm}[{{\cite[Remark 1.6 and Theorem 4.1]{KP}}}]
$\smod_w$ is a weight module and $\ch(\smod_w)=\schub_w$. 
\end{thm}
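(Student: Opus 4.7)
The plan is to split the claim into its two halves and handle each separately; the first half is essentially automatic from the construction, while the second is the substantive point.

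Each exterior power $\bigwedge^{l_j} K^n$ inherits a weight $\ualg(\borel)$-structure from the defining representation $K^n = \bigoplus_i K u_i$ (with $u_i$ of weight $\epsilon_i$), hence so does the tensor product $T = \bigotimes_j \bigwedge^{l_j} K^n$ in which $u_w$ lives. The generator $u_w$ is itself a weight vector of weight $\sum_{(i,j) \in I(w)} \epsilon_i$, whose $i$-th coordinate is $\#\{j : (i,j) \in I(w)\} = \inv(w)_i$; hence $u_w$ has weight $\inv(w)$. Since $[\csa, e_{ij}] \subset K e_{ij}$, the $\ualg(\borel)$-submodule generated by a weight vector in a weight module is itself a weight module, so $\smod_w$ is a weight module.

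For the character identity, the natural approach is induction via the transition formula (Proposition~\ref{transition}), with an induction parameter chosen so that $v$ and the $w^{(a)}$ are strictly smaller than $w$ (for instance, the lex order on $\inv(w)$, under which transition is conventionally used to compute $\schub_w$). The base case $w = \mathrm{id}$ is immediate: $u_w = 1 \in K$, so $\smod_w = K$ and $\ch(\smod_w) = 1 = \schub_{\mathrm{id}}$. The inductive step would follow from a module-theoretic version of transition, namely a short exact sequence of $\ualg(\borel)$-modules
\[
0 \to N \to \smod_w \to K_{\epsilon_j} \otimes \smod_v \to 0
\]
in which $N$ admits a filtration with successive quotients $\smod_{w^{(a)}}$; passing to characters and applying the induction hypothesis then matches
\[
\ch(\smod_w) = x_j \ch(\smod_v) + \sum_{a=1}^{A} \ch(\smod_{w^{(a)}})
\]
with the transition formula for $\schub_w$. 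The surjection onto $K_{\epsilon_j} \otimes \smod_v$ is suggested by the tensor description: comparing $I(w)$ with $I(v)$ isolates a distinguished wedge factor of $u_w$ from which one can ``extract'' a $u_j$ and reassemble the rest into a vector representing $u_v$, sending $u_w \mapsto x_j \otimes u_v$. The kernel should then be generated by the vectors obtained by replacing that extracted $u_j$ with $u_{i_a}$ for $a = 1, \ldots, A$, each of which has weight $\inv(w^{(a)})$.

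The principal obstacle is the analysis of this kernel: one must verify that each substitution $u_j \rightsquigarrow u_{i_a}$ produces a lowest-weight vector whose cyclic $\ualg(\borel)$-submodule is isomorphic to $\smod_{w^{(a)}}$, and that these submodules assemble, with the correct signs coming from antisymmetry of the wedges, into a filtration whose associated graded exhausts $N$ without cancellation. This requires a careful combinatorial comparison of $I(w)$, $I(v)$, and $I(w^{(a)})$. A cleaner alternative, closer in spirit to the argument of \cite{KP}, is to bypass transition and instead construct an explicit weight basis for $\smod_w$ (for example via compatible sequences of reduced words for $w$, or via the labelled Rothe diagrams of \cite{FGRS}); one then reads off $\ch(\smod_w)$ as a weighted sum over these combinatorial objects and matches it against an independently known combinatorial formula for $\schub_w$, at the cost of additional combinatorial input but with no need for the transition induction.
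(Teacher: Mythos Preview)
The paper does not supply a proof of this statement: it is quoted from \cite{KP} (Remark 1.6 for the weight-module assertion, Theorem 4.1 for the character) and used as background input for everything that follows. There is therefore no in-paper argument to compare your proposal against.

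That said: your first paragraph is correct and is exactly the content of \cite[Remark 1.6]{KP}. For the character identity, your two sketches point at the two natural routes. The transition-based induction can in principle be made to work, but the obstacle you name is genuine: constructing the surjection $\smod_w \surj K_{\epsilon_j}\otimes\smod_v$ and identifying a KP filtration of its kernel requires a careful column-by-column comparison of $I(w)$, $I(v)$, and the $I(w^{(a)})$, and you have not carried this out. (A closely related filtration, at the level of the presentations $\ualg(\borel)/I_w$ rather than the modules $\smod_w$ themselves, is precisely what the present paper establishes in Sections~\ref{presen}--\ref{prf_of_mainlem}, but that argument already \emph{uses} the theorem under discussion.) The second route you mention---an explicit weight basis matched against a known combinatorial formula for $\schub_w$---is the method of the cited reference \cite{KP} (and \cite{FGRS} for the basis via balanced labellings). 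So your ``cleaner alternative'' is the published proof, while your primary proposal is a plausible but unfinished sketch that would require real additional work to complete.
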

\begin{eg}
If $w=s_i$, then $I(s_i)=\{(i,i+1)\}$, $u_{s_i}=u_i$ and $\smod_{s_i}=\bigoplus_{1 \leq j \leq i} Ku_j = K^i$. So $\ch(\smod_{s_i}) = x_1+\cdots+x_i = \schub_{s_i}$. 
\end{eg}
\begin{eg}
More generally, if $w$ is grassmannian, i.e. there exists a $k$ such that $w(1)<\cdots<w(k)$ and $w(k+1) < w(k+2) < \cdots$, then the inversion diagram $I(w)$ of $w$ is a ``French-notation'' Young diagram (see Figure 2). Thus in this case, $u_w$ is a lowest-weight vector in certain irreducible representation of $\mathfrak{gl}_k$, and $\smod_w$ is equal to this representation (seen as a representation of $\borel_n$ through the morphism $\borel_n \ni \epq \mapsto \begin{cases} \epq & (q \leq k) \\ 0 & (q > k)\end{cases} \in \mathfrak{gl}_k$). 
This reflects the fact that the Schubert polynomial indexed by a grassmannian permutation is a Schur polynomial. 
\end{eg}
\begin{center}
{\unitlength 0.1in%
\begin{picture}(  8.9000,  8.0000)(  4.0000,-10.0000)%
%
\special{pn 4}%
\special{sh 1}%
\special{ar 400 200 8 8 0  6.28318530717959E+0000}%
\special{sh 1}%
\special{ar 600 200 8 8 0  6.28318530717959E+0000}%
\special{sh 1}%
\special{ar 800 200 8 8 0  6.28318530717959E+0000}%
\special{sh 1}%
\special{ar 1000 200 8 8 0  6.28318530717959E+0000}%
\special{sh 1}%
\special{ar 1200 200 8 8 0  6.28318530717959E+0000}%
\special{sh 1}%
\special{ar 600 400 8 8 0  6.28318530717959E+0000}%
\special{sh 1}%
\special{ar 1000 400 8 8 0  6.28318530717959E+0000}%
\special{sh 1}%
\special{ar 1200 400 8 8 0  6.28318530717959E+0000}%
\special{sh 1}%
\special{ar 1000 800 8 8 0  6.28318530717959E+0000}%
\special{sh 1}%
\special{ar 1200 800 8 8 0  6.28318530717959E+0000}%
\special{sh 1}%
\special{ar 1200 1000 8 8 0  6.28318530717959E+0000}%
%
\special{pn 8}%
\special{pa 710 310}%
\special{pa 890 310}%
\special{pa 890 490}%
\special{pa 710 490}%
\special{pa 710 310}%
\special{pa 890 310}%
\special{fp}%
%
\special{pn 8}%
\special{pa 710 510}%
\special{pa 890 510}%
\special{pa 890 690}%
\special{pa 710 690}%
\special{pa 710 510}%
\special{pa 890 510}%
\special{fp}%
%
\special{pn 8}%
\special{pa 910 510}%
\special{pa 1090 510}%
\special{pa 1090 690}%
\special{pa 910 690}%
\special{pa 910 510}%
\special{pa 1090 510}%
\special{fp}%
%
\special{pn 8}%
\special{pa 1110 510}%
\special{pa 1290 510}%
\special{pa 1290 690}%
\special{pa 1110 690}%
\special{pa 1110 510}%
\special{pa 1290 510}%
\special{fp}%
\end{picture}}%

\noindent{\scriptsize Figure 2: inversion diagram of a grassmannian permutation $[136245]$ is a French-style Young diagram of shape $(3,1)$. }
\end{center}
\begin{eg}
\label{eg2143}
More generally, if $w$ is $2143$-avoiding, then it can be seen that $u_w$ is an extremal vector in an irreducible representation of $\mathfrak{gl}_n$ (using the fact (\cite[(1.27)]{Mac}) that the rows of $I(w)$ for $2143$-avoiding $w$ is totally preordered by inclusion). Thus in this case the corresponding KP module $\smod_w$ is isomorphic to a Demazure module of $\borel$: i.e. a module generated by an extremal vector of an irreducible representation of $\mathfrak{gl}_n$. 
Note that this corresponds to the result of Lascoux and Schutzenberger (\cite[Theorem 5]{LS}, \cite[Corollary 10.5.2]{Las}) that Schubert polynomials with 2143-avoiding indices are equal to certain key polynomials. 

On the other hand, consider $w=[2143]$. Then $I(w)=\{(1,2),(3,4)\}$, $u_w=u_1 \otimes u_3$,  $\smod_w=\bigoplus_{1 \leq i \leq 3} K(u_1 \otimes u_i) = K^1 \otimes K^3$
and $\ch(\smod_w)=x_1(x_1+x_2+x_3)=\schub_w$. 
Note that in this case $\smod_w$ is \textit{not} isomorphic to the Demazure module with the same lowest weight: $\smod_w$ is three-dimensional while the Demazure module with the same lowest weight is two-dimensional. 
\footnote{
The KP module $\smod_{[2143]}$ in this example is, if not seen as a $\ualg(\borel)$-module but as a $\ualg(\nplus)$-module, isomorphic to a Demazure module (say $V(0,0,1)$); thus the results such as Theorem \ref{mainthm} for such kind of KP modules follow from known results on Demazure modules. But in fact there also exist KP modules which are, even as $\ualg(\nplus)$-modules, not isomorphic to any Demazure modules.  An example is $\smod_{[13254]} \cong K^2 \otimes K^4$. 
}
In general, $\smod_w$ is isomorphic to the Demazure module $V(\inv(w))$ with lowest weight $\inv(w)$ if and only if $w$ is 2143-avoiding. We also note here that there always exists a surjection from $\smod_w$ to $V(\inv(w))$: this can be seen using the result from the next section and \cite[Theorem 3.4]{Jos}. 
\end{eg}

In this paper we have to slightly extend the notion of Schubert polynomials and KP modules. 
For $\lmb=(\lmb_1, \ldots, \lmb_n) \in \ZZ^n$, we define the Schubert polynomial and the KP module associated to $\lmb$ as follows. 
For $\lmb \in \nonneg^n$, let $\schub_\lmb=\schub_w$ and $\smod_\lmb=\smod_w$ where $w=\perm(\lmb)$. 
For a general $\lmb \in \ZZ^n$, take $k \in \ZZ$ so that $\lmb+k\one \in \nonneg^n$, where $\one=(1, \ldots, 1)$, 
and we define $\schub_\lmb=x^{-k\one}\schub_{\lmb+k\one}$ and $\smod_\lmb=K_{-k\one} \otimes \smod_{\lmb+k\one}$. 
Note that this definition does not depend on the choice of $k$, 
since if $\perm(\lmb)=w$, then $\perm(\lmb+\one)=\tilde{w}=[w(1)+1 \; \cdots \; w(n)+1 \,\; 1 \,\; w(n+1)+1 \; \cdots]$, and $\schub_{\tilde{w}}=x^\one\schub_w$ and $\smod_{\tilde{w}}=K_{\one} \otimes \smod_w$ hold for them. 
It then follows from the theorem above that $\smod_\lmb$ is a weight module and $\ch(\smod_\lmb)=\schub_\lmb$ for all $\lmb \in \ZZ^n$. 
Note that, since $\smod_\lmb$ is generated by an element of weight $\lmb$, if $(\smod_\lmb)_\mu \neq 0$ (i.e. if $x^\mu$ appears in $\schub_\lmb$ with nonzero coefficient) then $\mu \rtgeq \lmb$, 
where $\rtgeq$ denote the dominance order: $\mu \rtgeq \lmb$ iff $\mu-\lmb=\sum_{i=1}^{n-1} a_i(\epsilon_i-\epsilon_{i+1})$ for some $a_1, \ldots, a_{n-1} \in \nonneg$. 
We also note here that for any $\mu, \nu \in \ZZ^n$, the number of $\lmb \in \ZZ^n$ with $\mu \rtgeq \lmb \rtgeq \nu$ is finite. 

A \textit{KP filtration} of a weight $\borel$-module $M$
is a sequence $0=M_0 \subset \cdots \subset M_r=M$ of weight $\borel$-modules
such that each $M_i / M_{i-1}$ is isomorphic to some KP module $\smod_{\lmb^{(i)}}$. 
Note that if $M$ has a KP filtration then $\ch(M)$ is a positive sum of Schubert polynomials. 


\section{Annihilator of the lowest weight vector}
\label{presen}
For $w \in S_\infty^{(n)}$ and $1 \leq i<j \leq n$, 
let $C_{ij}(w) = \{ k : \mbox{$(i,k) \not\in I(w)$, $(j,k) \in I(w)$}\} = \{k : k > j, w(i) < w(k) < w(j)\}$ and let $\mij(w)=|C_{ij}(w)| = \#\{k>j : w(i)<w(k)<w(j)\}$ (in particular, $\mij(w)=0$ if $w(i)>w(j)$). 
Since $\eij^2 u_w^{(k)}=0$ for $k \in C_{ij}(w)$ and $\eij u_w^{(k)}=0$ for $k \not\in C_{ij}(w)$, 
we see that $\eij^{\mij(w)+1}$ annihilates $u_w = u_w^{(1)} \otimes u_w^{(2)} \otimes \cdots$. 
Let $I_w$ denote the left ideal of $\ualg(\borel)$ generated by $h-\langle \inv(w), h\rangle$ ($h \in \csa$) and $\eij^{\mij(w)+1}$ ($i<j$).
Then, by the observation above and the fact that $u_w$ has weight $\inv(w)$, there is a unique surjective morphism of $\ualg(\borel)$-modules from $\ualg(\borel) / I_w$ to $\smod_w$
sending $1 \bmod I_w$ to $u_w$. 
We show the following: 
\begin{thm}
The surjection above is an isomorphism. 
\label{mainthm}
\end{thm}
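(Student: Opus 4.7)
I start by reducing the theorem to a character inequality. Since the surjection $\pi : \ualg(\borel)/I_w \surj \smod_w$ of finite-dimensional weight $\borel$-modules is already given, it suffices to show $\ch(\ualg(\borel)/I_w) \leq \schub_w$ coefficient-wise; combined with $\pi$, this forces equality and hence injectivity. I prove this inequality by induction along the transition formula (Proposition \ref{transition}), using a well-founded order on $S_\infty^{(n)}$ (for instance the reverse-lex order on Lehmer codes) under which both $v$ and each $w^{(a)}$ produced by transition are strictly smaller than $w$. The base case $w = \mathrm{id}$ is immediate: all $\mij(w) = 0$, so $I_w$ contains every $\eij$ and every $h - 0$, whence $\ualg(\borel)/I_w \cong K_0 \cong \smod_{\mathrm{id}}$.

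For the inductive step, let $j_0$ be the last descent of $w$, let $k_0 > j_0$ be maximal with $w(j_0) > w(k_0)$, set $v = w t_{j_0 k_0}$ and $w^{(a)} = v t_{i_a, j_0}$. The inductive hypothesis gives $\ch(\ualg(\borel)/I_v) \leq \schub_v$ and $\ch(\ualg(\borel)/I_{w^{(a)}}) \leq \schub_{w^{(a)}}$, so in view of the transition identity $\schub_w = x_{j_0}\schub_v + \sum_a \schub_{w^{(a)}}$ it suffices to construct at the presentation level a surjection
\[
\ualg(\borel)/I_w \surj K_{\epsilon_{j_0}} \otimes \ualg(\borel)/I_v
\]
whose kernel is spanned over $K$ by the images of morphisms $\ualg(\borel)/I_{w^{(a)}} \to \ualg(\borel)/I_w$. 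The quotient map should send $1 \bmod I_w$ to $1 \otimes (1 \bmod I_v)$, and well-definedness reduces to the combinatorial inequalities $\mij(w) \geq \mij(v)$ for all $i < j$, a direct comparison of the sets $C_{ij}(w)$ and $C_{ij}(v)$ under the transition swap of positions $j_0$ and $k_0$. For each $a$ I exhibit an explicit element $\xi_a \in \ualg(\nplus)$ of weight $\epsilon_{i_a} - \epsilon_{j_0}$ (a natural candidate being $e_{i_a, j_0}$ itself, possibly corrected by terms involving $k_0$), and define the morphism $\ualg(\borel)/I_{w^{(a)}} \to \ualg(\borel)/I_w$ by $1 \mapsto \xi_a \bmod I_w$.

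The main obstacle --- presumably the content of the technical ``main lemma'' deferred to Section \ref{prf_of_mainlem} --- is the verification that each such morphism is well defined: for every $i' < j'$ one must show $e_{i'j'}^{m_{i'j'}(w^{(a)}) + 1} \xi_a \in I_w$. This requires detailed combinatorial bookkeeping of how $\mij$ changes between $w$, $v$, and $w^{(a)}$ under the two transition swaps, together with careful PBW-style commutation identities in $\ualg(\nplus)$ to rewrite each product $e_{i'j'}^{m+1} \xi_a$ modulo lower-order terms as an element of the left ideal generated by $\{e_{i'', j''}^{m_{i''j''}(w)+1}\}_{i'' < j''}$. Once these annihilator checks are in hand --- together with the easier observation that the images of the $\xi_a$ span the kernel of the quotient map, verifiable by a weight count using the two character inequalities from the inductive hypothesis --- the character inequality chains through and the theorem follows.
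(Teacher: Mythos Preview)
Your overall strategy---reduce to a dimension (or character) inequality and prove it by induction along Monk/transition---is exactly the paper's approach, and your identification of the hard step as a ``main lemma'' deferred to Section~\ref{prf_of_mainlem} is accurate. However, two of your concrete choices are wrong and would block the argument as written.

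\medskip
\textbf{(i) The weight of $\xi_a$.} You posit $\xi_a$ of weight $\epsilon_{i_a}-\epsilon_{j_0}$, with $e_{i_a,j_0}$ as the natural candidate. But $\inv(w^{(a)})-\inv(w)=(m_{i_a,j_0}(v)+1)(\epsilon_{i_a}-\epsilon_{j_0})$, so a $\ualg(\borel)$-map $\ualg(\borel)/I_{w^{(a)}}\to\ualg(\borel)/I_w$ must send $1$ to an element of that weight. The paper's choice is the only natural one: $x_a=e_{i_a,j_0}^{\,m_{i_a,j_0}(v)+1}$. This is not a cosmetic correction---nothing of weight $\epsilon_{i_a}-\epsilon_{j_0}$ can serve.

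\medskip
\textbf{(ii) The target of the maps.} You propose morphisms $\ualg(\borel)/I_{w^{(a)}}\to\ualg(\borel)/I_w$, i.e.\ you need $I_{w^{(a)}}x_a\subset I_w$. The paper does \emph{not} claim this, and in the case analysis of Section~\ref{prf_of_mainlem} it would fail. Instead one introduces the filtration
\[
I_w=I^{(0)}\subset I^{(1)}\subset\cdots\subset I^{(A)},\qquad I^{(a)}=I^{(a-1)}+\ualg(\borel)x_a,
\]
and proves $I_{w^{(a)}}x_a\subset I^{(a-1)}$, giving surjections $\ualg(\borel)/I_{w^{(a)}}\surj I^{(a)}/I^{(a-1)}$. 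The filtration is used essentially: for several relations (e.g.\ the case $q=i_a$, $v(i_a)<v(p)<v(j_0)$) the verification invokes $e_{p,j_0}^{\,m_{p,j_0}(v)+1}\in I^{(b)}$ for some earlier $b<a$ (this is Lemma~\ref{jiminidaiji}), and that element generally lies in $I^{(b)}\smallsetminus I_w$ since $m_{p,j_0}(w)=m_{p,j_0}(v)+1$.

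\medskip
\textbf{(iii) ``Images span the kernel'' is not easy.} That statement is equivalent to $I'_v\subset I^{(A)}$, which is half of Lemma~\ref{mainlemma2} and is proved via Lemma~\ref{jiminidaiji} and the $\ualg(\nplus_3)$ identities of Lemma~\ref{u3lem}; it is not recoverable from a weight count plus the inductive character inequalities. (Your direction $I_w\subset I'_v$, equivalently $m_{ij}(w)\ge m_{ij}(v)$, is correct and easy, but it is the other inclusion that does the work.)

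\medskip
With (i) fixed and the filtration of (ii) in place, the dimension inequality $d_w\le d_v+\sum_a d_{w^{(a)}}$ follows, and the induction (the paper orders by $(\ell(w),\schub_w(1))$ lexicographically) plus the known surjection onto $\smod_w$ finishes the proof exactly as you outline.
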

\begin{rem}
It is also possible to define $u_D$ and $\smod_D$ for a general finite subset $D \subset \{1, \ldots, n\} \times \NN$
as in the same way we defined KP modules
($\smod_D$ is often called the \textit{flagged Schur module} associated to $D$, see eg. \cite[\S 7]{Mag}; 
the equivalence of the definition there and our definition can be checked by the same argument as in \cite[Remark 1.6]{KP}). 
Again in this setting, if we let $\mij(D)=\#\{p : (i,p) \not \in D, (j,p) \in D\}$ 
and $\lmb_i=\#\{p : (i,p) \in D\}$, 
then $\eij^{\mij(D)+1}$ ($i<j$) and $h-\langle \lmb,h\rangle$ ($h \in \csa$) annihilate $u_D$, 
and therefore we have a surjective morphism $\ualg(\borel)/I_D \surj \smod_D$ 
where $I_D$ is the left ideal generated by these elements. 
But this is not an isomorphism for general $D$: for example, if $D=\{(2,1),(3,2)\}$, 
then $\ch(\ualg(\borel)/I_D)=x_2x_3+x_1x_3+x_2^2+2x_1x_2+x_1^2+x_1x_2^2x_3^{-1}$
while $\ch(\smod_D)=x_2x_3+x_1x_3+x_2^2+2x_1x_2+x_1^2$.
\end{rem}

The theorem can be reduced to the following lemma, which will be proved in the next section: 
\begin{lem}
Let $w \in S_\infty^{(n)} \smallsetminus \{\mathrm{id}\}$ and take $j, i_1, \ldots, i_A$ and $v, w^{(1)}, \ldots, w^{(A)}$ as in Proposition \ref{transition}. 
Let $x_a=e_{i_aj}^{m_{i_aj}(v)+1}$ for $a=1, \ldots, A$. Let $I^{(0)}=I_w$ and $I^{(a)}=I^{(a-1)}+\ualg(\borel)x_a$ for $a=1, \ldots, A$.  
Also let $I'_v$ be the left ideal of $\ualg(\borel)$ generated by $h-\langle \inv(w), h\rangle = h-\langle \inv(v)+\epsilon_j, h\rangle$ $(h \in \csa)$ and $\eij^{\mij(v)+1}$ $(i<j)$, so $\ualg(\borel) / I'_v \cong \ualg(\borel) / I_v \otimes K_{\epsilon_j}$. 
Then $I'_v \subset I^{(A)}$ and $I_{w^{(a)}}x_a \subset I^{(a-1)}$ for $a=1, \ldots, A$. 
\label{mainlemma2}
\end{lem}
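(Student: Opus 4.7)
My plan is to verify both inclusions by checking each generator of $I'_v$ and of $I_{\wa}$ in turn, combining combinatorial comparisons among the invariants $m_{pq}(\cdot)$ with commutator calculations in $\ualg(\borel)$.

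The Cartan generators are handled almost immediately. In part (1), the generators $h - \langle \inv(w), h\rangle$ of $I'_v$ are literally the Cartan generators of $I_w$ (since $\inv(v) + \epsilon_j = \inv(w)$), so they already lie in $I_w \subseteq I^{(A)}$. In part (2), a direct commutator calculation gives
\[
(h - \langle \inv(\wa), h\rangle)\, x_a = x_a\bigl(h - \langle \inv(\wa) - (m_{i_a j}(v)+1)(\epsilon_{i_a}-\epsilon_j),\, h\rangle\bigr),
\]
and the combinatorial identity $\inv(\wa) = \inv(w) + (m_{i_a j}(v)+1)(\epsilon_{i_a}-\epsilon_j)$, which one verifies directly from the definitions of $v$ and $\wa$, reduces the right-hand side to $x_a \cdot (h - \langle \inv(w), h\rangle)$; this lies in the left ideal $I_w \subseteq \iprev$.

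The real content lies in the nilpotent generators $e_{pq}^{m_{pq}(\cdot)+1}$. For part (1), I would split into cases based on how $\{p,q\}$ meets $\{j,k\}$. When $\{p,q\}\cap\{j,k\}=\emptyset$, the combination of the maximality of $k$ subject to $w(j)>w(k)$ with the fact that $j$ is the rightmost descent of $w$ forces $m_{pq}(v) = m_{pq}(w)$ (by swapping the roles of $r=j$ and $r=k$ in the counts and checking that the relevant indicators agree), so the required generator lies in $I_w$. When $(p,q) = (i_a, j)$, the required generator is exactly the element $x_a$ adjoined to form $I^{(A)}$. In the remaining cases (essentially $q \in \{j,k\}$ with $p \neq i_a$, or $p = j$), one either obtains $m_{pq}(v) \geq m_{pq}(w)$ (which gives the inclusion automatically) or expresses the generator as a combination of elements of $I_w$ and left multiples of the $x_a$'s using commutator identities of the form $[e_{ab}, e_{cd}] = \delta_{bc} e_{ad}$ in $\ualg(\borel)$.

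For part (2) I must show $e_{pq}^{m_{pq}(\wa)+1}\, x_a \in \iprev$ for every $p<q$. The strategy is to slide $e_{pq}^{m_{pq}(\wa)+1}$ past $x_a = e_{i_a j}^{m_{i_a j}(v)+1}$ using iterated $\borel$-commutators, again splitting on how $(p,q)$ meets $\{i_a, j, k\}$. When $\{p,q\}\cap\{i_a,j\}=\emptyset$ the two elements commute, and a comparison of $m_{pq}(\wa)$ with $m_{pq}(w)$ shows that the product is a left multiple of a generator of $I_w$. Otherwise the commutator expansion produces correction terms involving elements of the form $e_{pj}$ and $e_{i_a q}$, which I identify as either generators of $I_w$ (via appropriate $m$-comparisons) or as some $x_b$ with $b<a$; the ordering $i_1<\cdots<i_A$ is essential here. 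The main obstacle I foresee is this last bookkeeping step: one must verify in every remaining case that the precise combination of correction terms produced is absorbed by $I_w + \sum_{b<a}\ualg(\borel)x_b$, so that one never needs $x_b$ with $b \geq a$. This reduces to a delicate combinatorial check, spread across many sub-cases, that the values $m_{pq}(w)$, $m_{pq}(v)$ and $m_{pq}(\wa)$ line up compatibly for each occurrence.
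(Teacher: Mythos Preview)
Your overall architecture --- handle the Cartan generators by the weight identity $\inv(\wa)=\inv(w)+(m_{i_aj}(v)+1)(\epsilon_{i_a}-\epsilon_j)$, then run a case analysis on how $\{p,q\}$ meets the distinguished indices --- matches the paper's proof. The Cartan parts are correct as you wrote them.

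The gap is in the algebraic tool you plan to use for the nilpotent generators. You propose to handle all remaining cases ``using commutator identities of the form $[e_{ab},e_{cd}]=\delta_{bc}e_{ad}$'' and by ``sliding $e_{pq}^{m_{pq}(\wa)+1}$ past $x_a$ using iterated $\borel$-commutators''. This is not enough. The paper's proof rests on a package of identities (its Lemma~\ref{u3lem}) of the type
\[
e_{pr}^{N}e_{qr}^{M}\in\langle e_{pq}^{N'+1},\,e_{qr}^{M'+1}\rangle\quad\text{whenever }N+M>N'+M',
\]
and several variants (items (1)--(6) there). These are not formal consequences of the bracket relations; they are obtained by invoking the presentation of a finite-dimensional irreducible $\mathfrak{sl}_3$-module (Proposition~\ref{u3prop}): one checks that the relevant element kills $(u_2\wedge u_3)^{a}\otimes u_3^{b}$ and then concludes it lies in $\langle e_{12}^{a+1},e_{23}^{b+1}\rangle$. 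Concretely, in the case $q=i_a$ with $v(p)<v(i_a)$ you need $e_{pi}^{\,m_{pi}+m_{ij}+2}e_{ij}^{\,m_{ij}+1}\in\langle e_{pi}^{\,m_{pi}+1},e_{ij}^{\,m_{ij}+2}\rangle$, and in the case $p=i_a$ with $v(q)>v(j)$ you need $e_{iq}^{\,m_{iq}-m_{ij}}e_{ij}^{\,m_{ij}+1}\in\langle e_{iq}^{\,m_{iq}+1},e_{qj}\rangle$; neither drops out of commutator expansion alone. Likewise, the hard subcase of part~(1) (namely $q=j$, $v(p)<v(j)$, $p\notin\{i_1,\dots,i_A\}$, handled in the paper by Lemma~\ref{jiminidaiji}) requires the same $\mathfrak{sl}_3$ input.

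Two smaller points. First, for part~(1) the index $k$ is irrelevant: the paper's case split is simply $q\neq j$ (where $m_{pq}(v)=m_{pq}(w)$ outright) versus $q=j$. Second, your remark that ``the ordering $i_1<\cdots<i_A$ is essential'' for producing $x_b$ with $b<a$ is not quite how the argument runs: what is actually used is that $e_{pj}^{m_{pj}(v)+1}\in I^{(a')}$ where $a'$ is the \emph{maximal} index with $v(i_{a'})>v(p)$ (Lemma~\ref{jiminidaiji}(2)), and one checks that in the relevant cases this $a'$ is $\le a-1$.
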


Here we show Theorem \ref{mainthm} assuming Lemma \ref{mainlemma2}. 
Let $d_w = \dim \ualg(\borel)/I_w$. The conclusion of Lemma \ref{mainlemma2} claims that there exist surjective morphisms $\ualg(\borel)/I_v \otimes K_{\epsilon_j} \cong \ualg(\borel)/I'_v \surj \ualg(\borel)/I^{(A)} : (x \bmod I'_v) \mapsto (x \bmod I^{(A)})$ and $\ualg(\borel)/I_{w^{(a)}} \surj I^{(a)}/I^{(a-1)} : (x \bmod I_{w^{(a)}}) \mapsto (xx_a \bmod I^{(a-1)})$ (note that $xx_a \in I^{(a)}$ since $x_a \in I^{(a)}$). 
Thus $\ualg(\borel)/I_w = \ualg(\borel)/I^{(0)}$ has a quotient filtration $\ualg(\borel)/I^{(0)} \surj \ualg(\borel)/I^{(1)} \surj \cdots \surj \ualg(\borel)/I^{(A)} \surj 0$ with each subquotient being a quotient of $\ualg(\borel)/I_{w^{(1)}}, \cdots, \ualg(\borel)/I_{w^{(A)}}$ and $\ualg(\borel)/I_v \otimes K_{\epsilon_j}$ respectively. 
Therefore $d_w \leq d_{w^{(1)}}+\cdots+d_{w^{(A)}}+d_v$. 
So, by Proposition \ref{transition} and induction on lexicographic ordering on $(\ell(w), \schub_w(1))$, we see that $d_w \leq \schub_w(1)$ hold for any $w$. But on the other hand, we have a surjection $\ualg(\borel) / I_w \surj \smod_w$ and thus $d_w \geq \dim \smod_w = \schub_w(1)$. Thus $d_w = \schub_w(1)$ and the surjection above must be an isomorphism. This completes the proof of Theorem \ref{mainthm}. 

\section{{Proof of Lemma \ref{mainlemma2}}}
\label{prf_of_mainlem}
Throughout this section, let $w \in S_\infty^{(n)} \smallsetminus \{\mathrm{id}\}$ and take $j, i_1, \ldots, i_A$, $v, w^{(1)}, \ldots, w^{(A)}$ as in Proposition \ref{transition}. 
Take $x_1, \ldots, x_a$ and $I^{(0)}, \ldots, I^{(A)}$ as in Lemma \ref{mainlemma2}. 
Let $m_{pq}=m_{pq}(v)$ for $1 \leq p<q \leq n$. 
For $x, y, \ldots, z \in \ualg(\borel)$, let $\langle x, y, \ldots, z \rangle$ denote the left ideal of $\ualg(\borel)$ generated by $x, y, \ldots, z$. 

To make the calculations simple, we use the following basic fact from the representation theory of semisimple Lie algebras: 
\begin{prop}
Let $\nplus_3 = Ke_{12} \oplus Ke_{13} \oplus Ke_{23}$ be the Lie algebra of all $3 \times 3$ strictly upper triangular matrices 
which acts on $K^3=Ku_1 \oplus Ku_2 \oplus Ku_3$ and $\bigwedge^2 K^3 = K(u_1 \wedge u_2) \oplus K(u_1 \wedge u_3) \oplus K(u_2 \wedge u_3)$ in the usual way. 
Then for $a, b \geq 0$, the $\ualg(\nplus_3)$-module generated by $(u_2 \wedge u_3)^a \otimes u_3^b \in S^a(\bigwedge^2 K^3) \otimes S^b(K^3)$ 
$(\text{$S^\bullet$ denotes the symmetric product})$
is isomorphic to $\ualg(\nplus_3)/I_{a,b}$ where $I_{a,b}$ is the left ideal of $\ualg(\nplus_3)$ generated by $e_{12}^{a+1}$ and $e_{23}^{b+1}$. 
\label{u3prop}
\end{prop}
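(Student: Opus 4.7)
The plan is to identify both the quotient $\ualg(\nplus_3)/I_{a,b}$ and the cyclic submodule $M := \ualg(\nplus_3) \cdot v$, where $v := (u_2 \wedge u_3)^a \otimes u_3^b$, with the irreducible finite-dimensional $\mathfrak{gl}_3$-representation $V$ of highest weight $(a+b, a, 0)$, and deduce the isomorphism from this identification.

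First, I would establish the surjection $\phi : \ualg(\nplus_3)/I_{a,b} \twoheadrightarrow M$ by verifying that $I_{a,b}$ annihilates $v$. On $\bigwedge^2 K^3$, $e_{12}$ sends $u_2 \wedge u_3 \mapsto u_1 \wedge u_3 \mapsto 0$ and annihilates $u_3$, so extending $e_{12}$ as a derivation (via the coproduct) to $S^a(\bigwedge^2 K^3) \otimes S^b(K^3)$ and using that $e_{12}^2$ kills each copy of $u_2 \wedge u_3$ while $e_{12}$ already kills each copy of $u_3$, one finds $e_{12}^{a+1}(v) = 0$. The symmetric calculation --- $e_{23}$ kills each $u_2 \wedge u_3$, while $e_{23}$ acts as $u_3 \mapsto u_2 \mapsto 0$ on $K^3$ --- gives $e_{23}^{b+1}(v) = 0$.

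Next, I would identify $M$ with the irreducible $V$. The vector $v$ has weight $(0, a, a+b)$ and is a lowest weight vector for a $\mathfrak{gl}_3$-submodule of $S^a(\bigwedge^2 K^3) \otimes S^b(K^3)$ isomorphic to $V$. Since a finite-dimensional irreducible representation is generated as a $\ualg(\nplus_3)$-module by its lowest weight vector --- a standard consequence of the dual Verma module construction --- it follows that $M = V$, of dimension $(a+1)(b+1)(a+b+2)/2$ by the Weyl dimension formula.

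To conclude it remains to show $\dim \ualg(\nplus_3)/I_{a,b} \leq \dim V$. This is the classical presentation of finite-dimensional irreducible $\mathfrak{sl}_3$-modules: for a lowest weight $\mu$, the annihilator of the lowest weight vector in $\ualg(\nplus_3)$ is the left ideal generated by the Serre-type relations $e_{\alpha_i}^{-\langle\mu, \alpha_i^\vee\rangle+1}$ on the simple roots, which in our setup are exactly $e_{12}^{a+1}$ and $e_{23}^{b+1}$. This is the main obstacle since, although the statement is standard, it is not a direct formal consequence of the two generators: one must show, for instance, that $e_{13}^{a+b+1} \in I_{a,b}$. Such hidden relations can be deduced from the Heisenberg-type identity $e_{23}^{m} e_{12}^{n} = \sum_{k \geq 0} (-1)^k \binom{m}{k}\binom{n}{k} k!\, e_{13}^k e_{12}^{n-k} e_{23}^{m-k}$, valid because $e_{13} = [e_{12}, e_{23}]$ is central in $\ualg(\nplus_3)$; a careful PBW analysis of the quotient then yields a spanning set of the correct cardinality.
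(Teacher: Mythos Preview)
Your proposal is correct and follows essentially the same approach as the paper: both identify $v=(u_2\wedge u_3)^a\otimes u_3^b$ as a lowest weight vector generating an irreducible $\mathfrak{sl}_3$-module and then invoke the classical presentation of such modules by the relations $e_{\alpha_i}^{-\langle\mu,\alpha_i^\vee\rangle+1}$ on the simple roots. The paper simply cites \cite[Theorem~21.4]{Hum} for this last step, whereas you spell out the surjection, compute the dimension via Weyl, and sketch a direct PBW argument for the inequality $\dim\ualg(\nplus_3)/I_{a,b}\leq\dim V$; this extra detail is fine but not needed once the standard reference is in hand.
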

\begin{proof}
First note that $(u_2 \wedge u_3)^a \otimes u_3^b$ is a lowest weight vector of an irreducible representation of $\mathfrak{sl}_3$: i.e.  $\ualg(\nplus_3) ((u_2 \wedge u_3)^a \otimes u_3^b)$ is an irreducible representation of $\mathfrak{sl}_3$. Thus the claim is merely a well-known fact that a finite-dimensional irreducible representation $V(\lmb)$, with lowest weight $\lmb$, of a finite-dimensional semisimple Lie algebra $\mathfrak{g}$ with simple root system $\Delta$ and upper-triangular part $\nplus$ is isomorphic to $\ualg(\mathfrak{\nplus})/\langle e_\alpha^{\langle \lmb, h_\alpha \rangle}\rangle_{\alpha \in \Delta}$ as $\ualg(\nplus)$-modules (\cite[Theorem 21.4]{Hum}). 
\end{proof}
From this proposition, we have the following: 
\begin{lem}
Let $f(x,y,z)$ be a polynomial (in non-commutative variables) and let $a, b \geq 0$. 
If $f(e_{12}, e_{13}, e_{23})((u_2 \wedge u_3)^a \otimes u_3^b) = 0$, 
then for $1 \leq p < q < r \leq n$, $f(\epq, \epr, \eqr) \in \langle \epq^{a+1}, \eqr^{b+1} \rangle$. 
\label{hoge}
\end{lem}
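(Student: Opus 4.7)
The plan is to transfer the statement from the small rank-3 setting of Proposition \ref{u3prop} to the larger $\borel$ via a Lie algebra homomorphism. Specifically, for fixed $p<q<r$ I would define $\phi \colon \nplus_3 \to \borel$ by $\phi(e_{12})=\epq$, $\phi(e_{13})=\epr$, $\phi(e_{23})=\eqr$ and check that it respects brackets: using $e_{ij}e_{kl}=\delta_{jk}e_{il}$ one computes $[\epq,\eqr]=\epr$, $[\epq,\epr]=0$, $[\epr,\eqr]=0$, which matches $[e_{12},e_{23}]=e_{13}$, $[e_{12},e_{13}]=0$, $[e_{13},e_{23}]=0$ in $\nplus_3$. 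Thus $\phi$ extends uniquely to a $K$-algebra map $\Phi \colon \ualg(\nplus_3) \to \ualg(\borel)$ sending $f(e_{12},e_{13},e_{23})$ to $f(\epq,\epr,\eqr)$ and $e_{12}^{a+1}, e_{23}^{b+1}$ to $\epq^{a+1}, \eqr^{b+1}$ respectively.

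Next I would invoke Proposition \ref{u3prop}: the hypothesis $f(e_{12},e_{13},e_{23})((u_2\wedge u_3)^a \otimes u_3^b)=0$ together with the isomorphism $\ualg(\nplus_3)/I_{a,b} \cong \ualg(\nplus_3)\cdot ((u_2\wedge u_3)^a\otimes u_3^b)$ tells us precisely that $f(e_{12},e_{13},e_{23}) \in I_{a,b} = \langle e_{12}^{a+1}, e_{23}^{b+1}\rangle$ as a left ideal of $\ualg(\nplus_3)$. Hence one can write
\[
f(e_{12},e_{13},e_{23}) = \alpha \cdot e_{12}^{a+1} + \beta \cdot e_{23}^{b+1}
\]
for some $\alpha, \beta \in \ualg(\nplus_3)$.

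Finally I would apply the algebra homomorphism $\Phi$ to this identity to obtain
\[
f(\epq,\epr,\eqr) = \Phi(\alpha)\cdot \epq^{a+1} + \Phi(\beta)\cdot \eqr^{b+1},
\]
which manifestly lies in the left ideal $\langle \epq^{a+1}, \eqr^{b+1}\rangle$ of $\ualg(\borel)$, as required.

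There is no real obstacle here: the one thing to watch is that the bracket verification genuinely needs $p,q,r$ to be three \emph{distinct} indices (which is guaranteed by $p<q<r$), so that the potentially problematic Kronecker deltas $\delta_{qp}, \delta_{rp}, \delta_{rq}$ all vanish and the three-dimensional Heisenberg-type relations of $\nplus_3$ are reproduced exactly inside $\borel$. Once this is in place, the passage from $\ualg(\nplus_3)$-ideals to $\ualg(\borel)$-ideals is automatic because the images of a generating set of a left ideal, multiplied on the left by arbitrary elements of the target algebra, land in the ideal generated by those images.
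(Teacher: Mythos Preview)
Your proof is correct and follows essentially the same approach as the paper: use Proposition~\ref{u3prop} to conclude $f(e_{12},e_{13},e_{23})\in I_{a,b}$, write it as a left combination of $e_{12}^{a+1}$ and $e_{23}^{b+1}$, and then transport this identity to $\ualg(\borel)$ via the substitution $e_{12}\mapsto\epq$, $e_{13}\mapsto\epr$, $e_{23}\mapsto\eqr$. The only difference is that you make the underlying algebra homomorphism $\Phi$ explicit and verify the bracket relations, whereas the paper leaves this implicit by simply writing the coefficients as polynomials $g,h$ in the three generators and substituting.
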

\begin{proof}
From Proposition \ref{u3prop} we have $f(e_{12}, e_{13}, e_{23}) \in \ualg(\nplus_3)e_{12}^{a+1} + \ualg(\nplus_3)e_{23}^{b+1}$, 
i.e. $f(e_{12},e_{13},e_{23})=g(e_{12},e_{13},e_{23})e_{12}^{a+1}+h(e_{12},e_{13},e_{23})e_{23}^{b+1}$ for some $g$ and $h$. 
Then $f(e_{pq},e_{pr},e_{qr})=g(e_{pq},e_{pr},e_{qr})e_{pq}^{a+1}+h(e_{pq},e_{pr},e_{qr})e_{qr}^{b+1} \in \langle e_{pq}^{a+1}, e_{qr}^{b+1} \rangle$. 

\end{proof}
With this lemma in hand, it is easy to prove the following: 
\begin{lem}
For $1 \leq p < q < r \leq n$ and $N, M, N', M' \geq 0$, 
\begin{enumerate}
\item $\epr^N\eqr^M \equiv 0 \pmod{\langle \epq^{N'+1}, \eqr^{M'+1} \rangle}$ if $N+M > N'+M'$. 
\item $\epq^N\epr^M \equiv 0 \pmod{\langle \epq^{N'+1}, \eqr^{M'+1} \rangle}$ if $N+M > N'+M'$. 
\item $\epr^N \equiv \frac{(-1)^N}{N!}\eqr^N\epq^N \pmod{\langle \epq^{M+1}, \eqr \rangle}$ (and in fact $\mathrm{mod} \; \langle e_{qr} \rangle$, although we do not need it here). 
\item $\epr^N \equiv \frac{1}{N!}\epq^N\eqr^N \pmod{\langle \epq, \eqr^{M+1} \rangle}$ (and $\mathrm{mod} \; \langle e_{pq} \rangle$: we do not need it here). 
\item $\epq^{N+M+1}\eqr^M \equiv 0 \pmod{\langle \epq^{N+1}, \eqr^{M+1} \rangle}$. 
\item $\epq^N\eqr^M \equiv 0 \pmod{\langle \epq, \epr^N, \eqr^{M+1} \rangle}$. 
\end{enumerate}
\label{u3lem}
\end{lem}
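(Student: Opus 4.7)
The plan is to apply Lemma \ref{hoge} to each of (1)--(6) by choosing suitable nonnegative integers $a, b$ and verifying the corresponding operator identity on the test vector $\xi_{a,b} := (u_2 \wedge u_3)^a \otimes u_3^b \in S^a(\bigwedge^2 K^3) \otimes S^b(K^3)$. All verifications will reduce to explicit computation, using that $e_{12}, e_{13}, e_{23}$ act as derivations on the symmetric algebra (so their $N$-th powers distribute multinomially), that $e_{12}$ can hit only $u_2 \wedge u_3$ (yielding $u_1 \wedge u_3$) and $u_2$ (yielding $u_1$), $e_{13}$ only $u_2 \wedge u_3$ (yielding $-u_1 \wedge u_2$) and $u_3$ (yielding $u_1$), and $e_{23}$ only $u_1 \wedge u_3$ (yielding $u_1 \wedge u_2$) and $u_3$ (yielding $u_2$), and that each individual factor can be hit at most once.

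For (1), (2), and (5), I would take $(a,b) = (N', M')$ (respectively $(N, M)$ for (5)) and show that the stated monomial annihilates $\xi_{a,b}$ by a counting argument: apply the rightmost factor first, bound the number of surviving factors that the next operator can still reach, and observe that the next exponent exceeds this bound. For example in (1), after $e_{23}^M$ one is left with (a scalar multiple of) $(u_2 \wedge u_3)^{N'} \otimes u_2^M u_3^{M'-M}$, in which $e_{13}$ can reach only $N' + (M'-M)$ factors, so $e_{13}^N$ vanishes whenever $N + M > N' + M'$.

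For (3) and (4), I would take $(a,b) = (M, 0)$ and $(a,b) = (0, M)$ respectively and compute both sides of each identity explicitly. For (3), both $e_{13}^N (u_2 \wedge u_3)^M$ and $\frac{(-1)^N}{N!} e_{23}^N e_{12}^N (u_2 \wedge u_3)^M$ evaluate to $(-1)^N N! \binom{M}{N} (u_1 \wedge u_2)^N (u_2 \wedge u_3)^{M-N}$; for (4), both $e_{13}^N u_3^M$ and $\frac{1}{N!} e_{12}^N e_{23}^N u_3^M$ evaluate to $N! \binom{M}{N} u_1^N u_3^{M-N}$.

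Part (6) is the only one whose ideal has three generators, so Lemma \ref{hoge} does not apply directly. Here I would take $(a,b) = (0, M)$ and verify the module-level identity $e_{12}^N e_{23}^M u_3^M = \frac{M!}{(M-N)!} e_{23}^{M-N} e_{13}^N u_3^M$ (both sides equal to $\frac{(M!)^2}{(M-N)!} u_1^N u_2^{M-N}$ when $N \leq M$, and zero otherwise). Lemma \ref{hoge} then yields $e_{pq}^N e_{qr}^M \equiv \frac{M!}{(M-N)!} e_{qr}^{M-N} e_{pr}^N \pmod{\langle e_{pq}, e_{qr}^{M+1}\rangle}$, and since the right-hand side lies in $\langle e_{pr}^N\rangle$, the desired congruence follows. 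The only real obstacle is combinatorial bookkeeping---tracking multinomial coefficients, signs, and edge cases consistently across the six parts---but no new idea beyond Lemma \ref{hoge} is required.
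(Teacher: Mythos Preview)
Your proposal is correct and follows exactly the paper's approach: parts (1)--(5) are handled by checking the hypothesis of Lemma~\ref{hoge} on the test vector $\xi_{a,b}$ for the indicated $(a,b)$, and part (6) is handled precisely as the paper does, by applying Lemma~\ref{hoge} with $(a,b)=(0,M)$ to obtain $e_{pq}^N e_{qr}^M \equiv \const\cdot e_{qr}^{M-N}e_{pr}^N \pmod{\langle e_{pq}, e_{qr}^{M+1}\rangle}$ and then absorbing the right-hand side into $\langle e_{pr}^N\rangle$. The only minor remark is that your choice $(a,b)=(M,0)$ and $(0,M)$ for (3) and (4) yields the stated congruences modulo $\langle e_{pq}^{M+1}, e_{qr}\rangle$ and $\langle e_{pq}, e_{qr}^{M+1}\rangle$ respectively, not the stronger parenthetical versions modulo $\langle e_{qr}\rangle$ and $\langle e_{pq}\rangle$; but the paper explicitly notes these stronger forms are not needed, and its own proof via Lemma~\ref{hoge} likewise only delivers the weaker ones.
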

\begin{proof}
(1)-(5) follows from straightforward calculations checking the condition of Lemma \ref{hoge}. 
(6) also follows from Lemma \ref{hoge}, 
since $e_{12}^Ne_{23}^Mu_3^M = \const \cdot u_1^Nu_2^{M-N} = \const \cdot e_{23}^{M-N}e_{13}^Nu_3^M$
so $\epq^N \eqr^M - \const \cdot \eqr^{M-N} \epr^N \in \langle \epq, \eqr^{M+1} \rangle$. 
\end{proof}

Let us move on to the proof of Lemma \ref{mainlemma2}. 
First we prove $I'_v \subset I^{(A)}$. 
Since $h-\langle \inv(w),h\rangle \in I_w \subset I^{(A)}$, 
it suffices to show $\epq^{\mpq+1} \in I^{(A)}$ for all $1 \leq p < q \leq n$. 
If $q \neq j$, we have $\mpq=\mpq(w)$ so $\epq^{\mpq+1} \in I_w \subset I^{(A)}$. 
If $q=j$ and $v(p)>v(j)$, then $\mpq=0=\mpq(w)$ (note that, by the choice of $k$, there does not exist $r>j$ such that $w(k)<w(r)<w(j)$), and thus again $\epq^{\mpq+1} \in I_w \subset I^{(A)}$. 
If $q=j$ and $p=i_a$, we have $e_{i_aj}^{m_{i_aj}+1} = x_a \in I^{(a)} \subset I^{(A)}$. 
Otherwise (i.e. if $q=j$, $v(p)<v(j)$ and $p \neq i_1, \ldots, i_A$), the conclusion follows from the following lemma: 
\begin{lem}
Let $p < j$, $v(p) < v(j)$ and $p \neq i_1, \ldots, i_A$. Then
\begin{enumerate}
\item There exists some $a \in \{1, \ldots, A\}$ such that $v(i_a)>v(p)$. 
\item Let $a \in \{1, \ldots, A\}$ be the maximal index such that $v(i_a)>v(p)$. 
Then $\epj^{\mpj+1} \in I^{(a)}$. 
\end{enumerate}
\label{jiminidaiji}
\end{lem}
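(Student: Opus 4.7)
The plan is to interpret the indices $i_1,\ldots,i_A$ geometrically. Plotting the points $\{(i,v(i)):i<j,\,v(i)<v(j)\}$ in the plane, the $i_a$ are exactly the points not dominated in the upper-right direction (the ``Pareto frontier''), and the indexing $i_1<i_2<\cdots<i_A$ automatically forces the values to be strictly decreasing, $v(i_1)>v(i_2)>\cdots>v(i_A)$. Under the hypotheses of the lemma, $p$ lies in this point set but off the frontier.

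For part (1), saying $p$ is off the frontier means there exists some $q<j$ with $q>p$ and $v(p)<v(q)<v(j)$; iterating (or equivalently taking the frontier point whose upper-right cone contains $(p,v(p))$) produces an $i_b$ with $i_b>p$ and $v(i_b)>v(p)$. For part (2), let $a$ be maximal with $v(i_a)>v(p)$. An analogous dominance argument first shows $p<i_a$: were $p>i_a$, the frontier point dominating $p$ would have index $>a$ and value $>v(p)$, contradicting maximality. The key step is then to show $\{r:i_a<r<j,\,v(p)<v(r)<v(i_a)\}=\emptyset$: any such $r$ would lie in the same point set with index $>i_a$, and would itself be dominated by some $i_{a'}$ with $a'>a$, forcing $v(i_{a'})>v(r)>v(p)$ and again contradicting the choice of $a$. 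Splitting $\{r>j:v(p)<v(r)<v(j)\}$ at the value $v(i_a)$, this emptiness yields $m_{pj}(v)=N+M$, where $N:=m_{p,i_a}(v)$ and $M:=m_{i_a,j}(v)$.

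It remains to realize $e_{pj}^{N+M+1}$ as an element of $I^{(a)}$. Since $v$ and $w$ differ only at positions $j$ and $k$, and both $v(j)$ and $v(k)=w(j)$ exceed $v(i_a)$, neither swap affects the count defining $m_{p,i_a}$; hence $m_{p,i_a}(w)=m_{p,i_a}(v)=N$. Therefore $e_{p,i_a}^{N+1}\in I_w\subset I^{(a)}$, and combined with $x_a=e_{i_a,j}^{M+1}\in I^{(a)}$, Lemma \ref{u3lem}(1) applied to the triple $(p,i_a,j)$ yields $e_{pj}^{N+M+1}\in\langle e_{p,i_a}^{N+1},e_{i_a,j}^{M+1}\rangle\subset I^{(a)}$, as required. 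The main obstacle is the geometric ``no gap'' property forced by the maximality of $a$; once that is in hand, the remainder is a short bookkeeping argument invoking Lemma \ref{u3lem}(1).
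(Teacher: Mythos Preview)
Your proof is correct and follows essentially the same route as the paper's: both establish the emptiness of $\{r:i_a<r<j,\,v(p)<v(r)<v(i_a)\}$, deduce $m_{pj}=m_{p,i_a}+m_{i_a,j}$, and then invoke Lemma~\ref{u3lem}(1). Your Pareto-frontier framing is a pleasant repackaging, and you are more explicit than the paper about why $p<i_a$ and why $m_{p,i_a}(w)=m_{p,i_a}(v)$ (the paper appeals to the blanket observation that $m_{pq}(w)=m_{pq}(v)$ whenever $q\neq j$), but the substance is the same.
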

\begin{proof}
(1): 
By the assumptions we have $\ell(vt_{pj}) > \ell(v)+1$, and thus there exists an $i$ such that $p < i < j$ and $v(p)<v(i)<v(j)$. Take $i$ to be maximal among such. Then there does not exist $i'$ such that $i<i'<j$ and $v(i)<v(i')<v(j)$, and thus $\ell(vt_{ij})=\ell(v)+1$. Therefore $i$ is in $\{i_1, \ldots, i_A\}$. This shows (1) since $v(i)>v(p)$. 

(2): 
Let $i=i_a$. 
First we claim that there exists no $r$ such that $i<r<j$ and $v(p)<v(r)<v(i)$. 
Suppose such $r$ exists. Take $r$ to be maximal among such. Then by the same argument as in (1) we see that $r$ is in $\{i_1, \ldots, i_A\}$, and since $i<r$ we have $r=i_b$ for some $b>a$. This contradicts to the choice of $a$. 

From the claim we see $\mpi=\#\{r>i : v(p)<v(r)<v(i)\} = \#\{r>j : v(p)<v(r)<v(i)\} = \mpj-\mij$. 
So by Lemma \ref{u3lem}(1), $\epj^{\mpj+1} \in \langle \epi^{\mpi+1}, \eij^{\mij+1} \rangle$. 
Since $\epi^{\mpi+1} \in I_w \subset I^{(a)}$ and $\eij^{\mij+1} = x_a \in I^{(a)}$ we are done. 
\end{proof}

Let us now prove $I_{w^{(a)}}x_a \subset \iprev$ ($a=1, \ldots, A$). 
Fix $a \in \{1, \ldots, A\}$ and let $i=i_a$. 
We want to prove $(h-\langle \inv(\wa),h \rangle)x_a \in \iprev$ for all $h \in \csa$ 
and $\epq^{\mpq(\wa)+1}x_a \in \iprev$ for all $p<q$. 
We first check $(h-\langle \inv(\wa),h \rangle)x_a \in \iprev$, i.e., the element $x_a \bmod \iprev$ has weight $\inv(\wa)$. 
It is easy to see that  
$\inv(\wa)=\inv(v)+(\mij+1)\epsilon_i-\mij\epsilon_j=\inv(w)+(\mij+1)(\epsilon_i-\epsilon_j)$. On the other hand, $x_a \bmod \iprev = \eij^{\mij+1} \bmod \iprev$ has weight $\inv(w)+(\mij+1)(\epsilon_i-\epsilon_j)$ since $1 \bmod \iprev$ has weight $\inv(w)$ and $\eij$ shifts the weight by $\epsilon_i-\epsilon_j$. This shows the claim. 

We now check $\epq^{\mpq(\wa)+1}x_a=\epq^{\mpq(\wa)+1}\eij^{\mij+1}$ is in $\iprev$ for all $1 \leq p<q \leq n$, case by case. 
First note that, by Lemma \ref{jiminidaiji} and the consideration before that lemma, 
$\epq^{\mpq+1} \in \iprev$ unless $q=j$ and $v(p) \leq v(i)$, 
and in such case we see $\epq^{\mpq+2} = \epq^{\mpq(w)+1} \in I_w \subset \iprev$. 
Also note that there does not exist an $r$ such that $i<r<j$ and $v(i)<v(r)<v(j)$, since $\ell(vt_{ij})=\ell(v)+1$. 

\begin{itemize}
\item $q>j$ : 
In this case we have $\mpq(\wa)=0=\mpq(w)$, 
since both $w$ and $\wa$ are increasing from $(j+1)$-th position
and thus there are no $r>q$ with $w(r)<w(q)$ or $\wa(r)<\wa(q)$. 
If $p \neq j$, $\epq \eij^{\mij+1} = \eij^{\mij+1} \epq \in \iprev$ since $\epq \in \iprev$. 
If $p = j$, $\ejq \eij^{\mij+1} = \eij^{\mij+1} \ejq - (\mij+1)\eij^{\mij}\eiq \in \iprev$ since $\ejq, \eiq \in \iprev$. 

\item $p=i$ and $q=j$ : Trivial from $\mij(\wa)=0$ and $\eij^{\mij(\wa)+1}\eij^{\mij+1} = \eij^{\mij+2} \in \iprev$. 

\vspace{2ex}
Hereafter we assume $p<q \leq j$ and $(p,q) \neq (i,j)$. 
\vspace{2ex}

\item $\{p,q\} \cap \{i,j\} = \varnothing$ : 
If $\mpq(\wa)=\mpq$ the proof is trivial 
since in this case $\epq^{\mpq(\wa)+1} \in \iprev$ and $\epq^{\mpq(\wa)+1}\eij^{\mij+1}=\eij^{\mij+1}\epq^{\mpq(\wa)+1}$. 

Consider the case $\mpq(\wa) \neq \mpq$. Then:

\begin{itemize}
\item $v(p)<v(q)$ must hold since otherwise $\mpq(\wa) = 0 = \mpq$, 
\item $q$ must be larger than $i$, since otherwise $\{\wa(r): r>q, \wa(p)<\wa(r)<\wa(q)\}=\{v(r): r>q, v(p)<v(r)<v(q)\}$ because $\wa$ and $v$ only differ at $i$-th and $j$-th positions, and 
\item exactly one of $v(i)$ and $v(j)$ must lie between $v(p)$ and $v(q)$ since otherwise $\{r > q : \wa(p)<\wa(r)<\wa(q)\} = \{r > q : v(p)<v(r)<v(q)\}$. 
\end{itemize}
Since $i<q<j$ and $\ell(vt_{ij})=\ell(v)+1$, the case $v(p)<v(i)<v(q)<v(j)$ cannot occur. So $v(i)<v(p)<v(j)<v(q)$. 
Then we have $p<i$ by the same reason. So we have $p<i<q<j$ and $v(i)<v(p)<v(j)<v(q)$. 

Here $\mpq(\wa)=\mpq-1$. 
Using the fact that there exists no $i<r<j$ with $v(i)<v(r)<v(j)$, 
we obtain $\miq-\mij = \#\{r > q : v(j)\leq v(r)<v(q)\} = \mpq-\mpj$. 

We have 
$
\epq^{\mpq}\eij^{\mij+1} \equiv \frac{(-1)^{\mij+1}}{(\mij+1)!}\epq^{\mpq}\eqj^{\mij+1}\eiq^{\mij+1} \pmod \iprev
$
by Lemma \ref{u3lem}(3) since $\eqj, \eiq^{\miq+1} \in \iprev$. 
Using $[e_{pq}, e_{qj}]=e_{pj}$ and $[e_{pq}, e_{pj}]=[e_{qj},e_{pj}]=0$ we see that 
the RHS is a linear combination of $\eqj^{\mij+1-\nu}\epq^{\mpq-\nu}\epj^{\nu}\eiq^{\mij+1}$ ($\nu \geq 0$). 
Thus it suffices to show that these elements are in $\iprev$ for each $\nu$. 
If $\nu > \mpj$ it is clear since $[\epj,\eiq]=0$ and $\epj^{\mpj+1} \in \iprev$. 
Otherwise, it suffices to show $\epq^{\mpq-\nu}\eiq^{\mij+1} \in \iprev$ since $[\epq,\epj]=0$. 
This follows from $\epq^{\mpq-\mpj}\eiq^{\mij+1}=\epq^{\miq-\mij}\eiq^{\mij+1} \in \iprev$, 
which can be deduced from $\epi, \eiq^{\miq+1} \in \iprev$ using Lemma \ref{u3lem}(1). 

\item $p=i$ : 
Since $i<q<j$, the case $v(i)<v(q)<v(j)$ cannot occur. 
If $v(q)<v(i)$, we have $\wa(q)<\wa(i)$ and thus $\miq(\wa)=0$. 
Therefore $\eiq^{\miq(\wa)+1}\eij^{\mij+1}=\eiq\eij^{\mij+1}=\eij^{\mij+1}\eiq \in \iprev$ since $\eiq \in \iprev$. 
If $v(q)>v(j)$, $\miq(\wa)=\miq-\mij-1$ since 
$\{r>q : \wa(i)<\wa(r)<\wa(q) \} = \{r>q : v(i)<v(r)<v(q) \} \smallsetminus (\{r>q : v(i)<v(r)<v(j) \} \cup \{j\}) = \{r>q : v(i)<v(r)<v(q) \} \smallsetminus (\{r>j : v(i)<v(r)<v(j) \} \cup \{j\})$,
so we want to show $\eiq^{\miq-\mij}\eij^{\mij+1} \in \iprev$. 
This follows from Lemma \ref{u3lem}(2) since $\eiq^{\miq+1}, \eqj \in \iprev$. 

\item $q=i$ : 
Here we have three cases to consider. 
If $v(p)<v(i)$, we have $\mpi(\wa)=\mpi+\mij+1$
since $\{r>i : \wa(p)<\wa(r)<\wa(i)\} = \{r>i : v(p)<v(r)<v(i) \} \cup \{r>i : v(i)<v(r)<v(j)\} \cup \{j\} = \{r>i : v(p)<v(r)<v(i) \} \cup \{r>j : v(i)<v(r)<v(j)\} \cup \{j\}$,
and so we want to show $\epi^{\mpi+\mij+2}\eij^{\mij+1} \in \iprev$. 
This follows from Lemma \ref{u3lem}(5) since $\epi^{\mpi+1}, \eij^{\mij+2} \in \iprev$. 
If $v(i)<v(p)<v(j)$, we have $\mpi(\wa)=\mpj$
since $\{r>i : \wa(p)<\wa(r)<\wa(i)\} = \{r>i : v(p)<v(r)<v(j)\} = \{r>j : v(p)<v(r)<v(j)\}$
and so we want to show $\epi^{\mpj+1}\eij^{\mij+1} \in \iprev$. 
This follows from Lemma \ref{u3lem}(6) since $\epi, \eij^{\mij+2}, \epj^{\mpj+1} \in \iprev$. 
Finally if $v(p)>v(j)$, we have $\wa(p)>\wa(i)$, $\mpi(\wa)=0$ and so we want to show $\epi\eij^{\mij+1} \in \iprev$. 
This follows from $\epi\eij^{\mij+1}=\eij^{\mij+1}\epi+(\mij+1)\eij^{\mij}\epj$ and $\epi, \epj \in \iprev$. 

\item $q=j$ :
This case consists of four subcases: 
\begin{itemize}
\item $p<i$ and $v(p)<v(i)$ : 
Here $\mpj(\wa)=\mpj-\mij$ since $\{r > j : \wa(p)<\wa(r)<\wa(j)\} = \{r > j : v(p)<v(r)<v(j)\} \smallsetminus \{r>j : v(i)<v(r)<v(j)\}$. 
So we want to show $\epj^{\mpj-\mij+1}\eij^{\mij+1} \in \iprev$. 
If there is no $r$ such that $i<r<j$ and $v(p)<v(r)<v(i)$, then $\mpi=\mpj-\mij$, 
and thus $\epj^{\mpj-\mij+1}\eij^{\mij+1}=\epj^{\mpi+1}\eij^{\mij+1} \in \iprev$ by Lemma \ref{u3lem}(1) since $\epi^{\mpi+1},\eij^{\mij+2} \in \iprev$. 
If there exists such $r$, take $r$ to be the largest among such ones. 
Then $\mpr=\mpj-\mrj$, since there exists no $s$ such that $r<s<j$ and $v(p)<v(s)<v(r)$. 
By $\eir, \erj^{\mrj+2} \in \iprev$ and Lemma \ref{u3lem}(4), 
we have $\epj^{\mpj-\mij+1}\eij^{\mij+1} \equiv \frac{1}{(\mij+1)!}\epj^{\mpj-\mij+1}\eir^{\mij+1}\erj^{\mij+1} \pmod \iprev$. 
Since the elements $\epr^{\mpr+1}=\epr^{\mpj-\mrj+1}$ and $\erj^{\mrj+2}$ are in $\iprev$ we see from Lemma \ref{u3lem}(1) that $\epj^{\mpj-\mij+1}\erj^{\mij+1} \in \iprev$. 
Thus $\epj^{\mpj-\mij+1}\eij^{\mij+1}\erj^{\mij+1} = \eij^{\mij+1}\epj^{\mpj-\mij+1}\erj^{\mij+1} \in \iprev$ and this shows the claim. 
\item $p<i$ and $v(p)>v(i)$ : 
Here $\mpj(\wa)=0$ since $\wa(p)>\wa(j)$. Thus $\epj^{\mpj(\wa)+1}\eij^{\mij+1}=\epj\eij^{\mij+1} \in \iprev$ by $\epi, \eij^{\mij+2} \in \iprev$ and Lemma \ref{u3lem}(1). 
\item $p>i$ and $v(p)<v(i)$ : 
Here $\mpj(\wa)=\mpj-\mij$ since $\{r > j : \wa(p)<\wa(r)<\wa(j)\} = \{r>j : v(p)<v(r)<v(j)\} \smallsetminus \{r>j : v(i)<v(r)<v(j)\}$. 
Thus $\epj^{\mpj(\wa)+1}\eij^{\mij+1}=\epj^{\mpj-\mij+1}\eij^{\mij+1} \in \iprev$ by $\eip, \epj^{\mpj+2} \in \iprev$ and Lemma \ref{u3lem}(1). 
\item $p>i$ and $v(p)>v(j)$ : 
Here $\mpj(\wa)=0$ since $\wa(p)>\wa(j)$. Thus $\epj^{\mpj(\wa)+1}\eij^{\mij+1}=\epj\eij^{\mij+2} \in \iprev$ since $\epj\eij^{\mij+2}=\eij^{\mij+2}\epj$ and $\epj \in \iprev$. 
\end{itemize}
\end{itemize}

Thus we checked $\epq^{\mpq(\wa)+1} x_a \in I^{(a-1)}$ for all $p<q$. This finishes the proof of Lemma \ref{mainlemma2}. \hfill $\Box$

\begin{rem}
It is clear from the definition that $m_{pr}(w) \leq m_{pq}(w)+m_{qr}(w)$ for any $p<q<r$. 
If $m_{pr}(w) = m_{pq}(w)+m_{qr}(w)$, then by Lemma \ref{u3lem}(1)
we have $e_{pr}^{m_{pr}(w)+1} \in \langle e_{pq}^{m_{pq}(w)+1}, e_{qr}^{m_{qr}(w)+1} \rangle$. 
Thus in fact the generators $e_{pr}^{m_{pr}(w)+1}$ such that
there exists some $q \in \{p+1, \ldots, r-1\}$ with $m_{pr}(w)=m_{pq}(w)+m_{qr}(w)$
are superfluous. 
\label{strict_triag}
\end{rem}

\section{Projectivity of KP modules}
\label{projectivity}
In this section we characterize KP modules by their projectivities in certain categories. 
This can be seen as an analog of Polo's theorem (\cite[Corollary 2.5]{P}, \cite[Theorem 3.1.10]{vdK}) for the case of KP modules. 

Let $\catc$ be the category of all weight modules. 
For $\Lmb \subset \ZZ^n$, 
let $\catc_{\Lmb}$ be the full subcategory of $\catc$ consists of all weight modules whose weights are in $\Lmb$. 
Note that if $|\Lmb| < \infty$ and $\Lmb'=\{\rho-\lmb : \lmb \in \Lmb \}$ ($\rho=(n-1, n-2, \ldots, 0)$), 
then $\catc_{\Lmb'} \cong \catc_{\Lmb}^{\mathrm{op}}$ by $M \mapsto M^{*} \otimes K_\rho$ (it is also true for infinite $\Lmb$ if we take $M^*$ to be the \textit{graded} dual $\bigoplus (M_\lmb)^*$ of $M$). 
\begin{lem}[{{cf. \cite[Lemma 3.1.1]{vdK}}}]
For finite $\Lmb \subset \ZZ^n$, $\catc_{\Lmb}$ has enough projectives (it is also true for infinite $\Lmb$ if we allow the weight spaces of a weight module to be infinite dimensional). 
\label{enoughlem}
\end{lem}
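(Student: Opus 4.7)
The plan is to construct, for each $\mu \in \Lmb$, a projective object $P_\mu \in \catc_\Lmb$ representing the functor $M \mapsto M_\mu$, and then cover arbitrary $M \in \catc_\Lmb$ by direct sums of these $P_\mu$'s. This mimics the standard construction of indecomposable projectives in BGG-type categories.

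First set $Q_\mu := \ualg(\borel) \otimes_{\ualg(\csa)} K_\mu$, where $K_\mu$ is regarded as a $\ualg(\csa)$-module. The tensor-restriction adjunction gives a natural isomorphism $\Hom_{\ualg(\borel)}(Q_\mu, M) \cong M_\mu$ for any weight $\ualg(\borel)$-module $M$. By PBW, the $\nu$-weight space of $Q_\mu$ is spanned by the ordered monomials in the $\eij$ ($i<j$) of total weight $\nu - \mu$; since $\langle \alpha,\rho\rangle > 0$ for every positive root $\alpha$, only finitely many such monomials exist, so each weight space of $Q_\mu$ is finite-dimensional (although $Q_\mu$ itself has infinitely many nonzero weight spaces). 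Now define $P_\mu := Q_\mu / I_\mu$, where $I_\mu$ is the $\ualg(\borel)$-submodule of $Q_\mu$ generated by the subspace $\bigoplus_{\nu \notin \Lmb}(Q_\mu)_\nu$. By construction $I_\mu$ contains the \emph{entire} $\nu$-weight space of $Q_\mu$ for every $\nu \notin \Lmb$, so $(P_\mu)_\nu = 0$ there; since $\Lmb$ is finite and the weight spaces of $Q_\mu$ are finite-dimensional, $P_\mu$ lies in $\catc_\Lmb$.

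The key property is the natural isomorphism $\Hom_{\catc_\Lmb}(P_\mu, M) \cong M_\mu$: any $\ualg(\borel)$-morphism $Q_\mu \to M$ with $M \in \catc_\Lmb$ annihilates every weight vector of $Q_\mu$ whose weight is not in $\Lmb$ (its image must land in a zero weight space of $M$), hence kills $I_\mu$ and factors uniquely through $P_\mu$; composing with the adjunction above gives the desired isomorphism. Since the $\mu$-weight-space functor is exact on $\catc_\Lmb$ (short exact sequences of weight modules are short exact on each weight space), $P_\mu$ is projective in $\catc_\Lmb$. Finally, for arbitrary $M \in \catc_\Lmb$ a choice of basis of each $M_\mu$ ($\mu \in \Lmb$) yields a morphism $\bigoplus_{\mu \in \Lmb} P_\mu^{\oplus \dim M_\mu} \surj M$ whose image contains every weight space of $M$ and is therefore surjective; the domain lies in $\catc_\Lmb$ since $\Lmb$ is finite and every $\dim M_\mu$ is finite.

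The only delicate point is guaranteeing that $P_\mu$ really has no weights outside $\Lmb$, which is built into the definition of $I_\mu$ as being generated by the \emph{whole} bad weight spaces of $Q_\mu$ rather than just some generators thereof. For infinite $\Lmb$ the construction of each individual $P_\mu$ still produces a weight module in the original finite-weight-space sense, but the covering sum $\bigoplus_\mu P_\mu^{\oplus \dim M_\mu}$ receives contributions to its $\nu$-weight space from every $\mu \in \Lmb$ with $\mu \rtleq \nu$ and so may become infinite-dimensional there; this is exactly the reason for the parenthetical weakening in the statement.
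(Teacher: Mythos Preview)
Your argument is correct and is essentially identical to the paper's: you build $Q_\mu = \ualg(\borel)\otimes_{\ualg(\csa)}K_\mu$ (the paper writes this as $\ualg(\borel)/\langle h-\langle\lmb,h\rangle\rangle_{h\in\csa}$), take its largest quotient with weights in $\Lmb$, observe $\Hom(P_\mu,M)\cong M_\mu$ to get projectivity, and then cover $M$ by a direct sum of copies. The only differences are cosmetic (notation and level of detail); your final paragraph explaining the infinite-$\Lmb$ caveat is a useful elaboration the paper leaves implicit.
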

\begin{proof}
For $\lmb \in \Lmb$, let $P_\lmb=\ualg(\borel)/\langle h-\langle h,\lmb \rangle \rangle_{h \in \csa}$ (which is isomorphic to $\ualg(\nplus)$ as a $\ualg(\nplus)$-module, by PBW theorem) and let $P_\lmb^\Lmb$ be the largest quotient of $P_\lmb$ which is in $\catc_\Lmb$, 
i.e. $P_\lmb^\Lmb$ is the quotient of $P_\lmb$ by the submodule generated by all weight spaces $(P_\lmb)_\mu$ ($\mu \not\in \Lmb$). 
Then $P_\lmb^\Lmb$ is projective in $C_\Lmb$ since for $N \in C_\Lmb$, $\Hom(P_\lmb^\Lmb,N)=\Hom(P_\lmb,N)=N_\lmb$. 

For general $M \in \catc_\Lmb$, $P_M = \bigoplus_\lmb (P_\lmb^\Lmb)^{\oplus \dim M_\lmb}$ is a projective object in $\catc_\Lmb$ and there is a surjection $P_M \surj M$. This shows the lemma. 
\end{proof}
Note that, if $\lmb \in \Lmb$, $P_\lmb^\Lmb$ has the maximum proper submodule $\bigoplus_{\mu \neq \lmb} (P_\lmb^\Lmb)_\mu$; therefore the head of $P_\lmb^\Lmb$ is $K_\lmb$, and thus $P_\lmb^\Lmb$ is the projective cover of $K_\lmb$ in $\catc_\Lmb$. 

We introduce two orderings (other than dominance order) on $\ZZ^n$ as follows. 
For two permutations $w, v \in S_\infty$, 
we write $w \lex\leq v$ if $w=v$ or there exists an $i \in \NN$ such that $w(j)=v(j)$ for all $j<i$ and $w(i)<v(i)$. 
Likewise, we write $w \rlex\leq v$ if $w=v$ or there exists an $i \in \NN$ such that $w(j)=v(j)$ for all $j>i$ and $w(i)<v(i)$. 
For $\lmb=(\lmb_1, \ldots, \lmb_n) \in \ZZ^n$, define $|\lmb|=\sum \lmb_i$. 
If $\lmb, \mu \in \nonneg^n$ and $w=\perm(\lmb), v=\perm(\mu)$, 
we write $\lmb \geq \mu$ if $|\lmb|=|\mu|$ and $w^{-1} \lex \leq v^{-1}$. 
For general $\lmb$ and $\mu$ in $\ZZ^n$, take $k$ so that $\lmb+k\one$ and $\mu+k\one$ are in $\nonneg^n$, 
and define $\lmb \geq \mu \iff \lmb+k\one \geq \mu+k\one$. 
Note that this definition does not depend on the choice of $k$ 
since $\perm(\lmb)^{-1} \lex\leq \perm(\mu)^{-1} \iff \perm(\lmb+\one)^{-1} \lex\leq \perm(\mu+\one)^{-1}$ for $\lmb,\mu \in \nonneg^n$. 
We define the other ordering $\geq'$ in the same way, except that we use $\rlex\leq$ instead of $\lex\leq$. 
We prepare the following two lemmas about these orderings:
\begin{lem}
For $\lmb, \mu \in \ZZ^n$, $\lmb \geq \mu$ if and only if $\rho-\lmb \geq' \rho-\mu$. 
\label{twoorder}
\end{lem}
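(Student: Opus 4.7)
The plan is to reduce the lemma to a direct computation inside a large symmetric group $S_N$, exploiting the classical identity $\inv(w_0^N u) = \rho^N - \inv(u)$ and the fact that right-multiplication by $w_0^N$ interchanges $\lex$ and $\rlex$ on $S_N$. Since both $\geq$ and $\geq'$ are translation-invariant by construction, I would first reduce to $\lmb, \mu \in \nonneg^n$ with $|\lmb| = |\mu|$, and pick $N$ large enough that $w := \perm(\lmb), v := \perm(\mu) \in S_N$ and $\rho + (N-n)\one - \lmb, \rho + (N-n)\one - \mu \in \nonneg^n$; invariance of $\geq'$ then restates the lemma as $\lmb \geq \mu \iff \rho + (N-n)\one - \lmb \geq' \rho + (N-n)\one - \mu$.

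Next, extending $\lmb$ to $\lmb^{\mathrm{ext}} \in \nonneg^N$ by zeros I have $\perm_N(\lmb^{\mathrm{ext}}) = w$ and hence $\perm_N(\rho^N - \lmb^{\mathrm{ext}}) = w_0^N w$. Since right-multiplication by $w_0^N$ reverses the one-line of any element of $S_N$ and converts $\lex$ into $\rlex$, I would obtain $\lmb \geq \mu \iff w^{-1} \lex\leq v^{-1} \iff (w_0^N w)^{-1} \rlex\leq (w_0^N v)^{-1}$.

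The main technical step is to compare this last $\rlex$ inequality with the one defining the right-hand side of the lemma, namely $\perm(\rho + (N-n)\one - \lmb)^{-1} \rlex\leq \perm(\rho + (N-n)\one - \mu)^{-1}$. The two permutations $w_0^N w \in S_N$ and $\perm(\rho + (N-n)\one - \lmb) \in S_\infty^{(n)}$ agree on positions $1, \ldots, n$ (let $S_\lmb \subseteq \{1, \ldots, N\}$ be this common value set, of size $n$), but arrange the complementary values in the subsequent positions in opposite orders: decreasingly in positions $n+1, \ldots, N$ for the first, increasingly in positions $n+1, n+2, \ldots$ for the second. Writing $P := \perm(\rho + (N-n)\one - \lmb)^{-1}$ and $Q := (w_0^N w)^{-1}$, this translates into $Q = \sigma \circ P$ as functions $\NN \to \NN$, where $\sigma$ is the involution of $\NN$ reversing the block $\{n+1, \ldots, N\}$ and fixing everything else. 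What I then need is that $\sigma$ does not alter the $\rlex$ comparison of $P$ and its $\mu$-analogue $P'$; equivalently, that at the rightmost position $j_0$ where $P, P'$ disagree, the pair $(P(j_0), P'(j_0))$ is not entirely inside the reversal block $\{n+1, \ldots, N\}$.

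I expect this last claim to be the main obstacle. The argument I would use is by contradiction: if $P(j_0), P'(j_0) \in \{n+1, \ldots, N\}$ then $j_0 \leq N$ and $j_0 \notin S_\lmb \cup S_\mu$. Any $j \in (j_0, N] \cap (S_\lmb \triangle S_\mu)$ would force $P(j), P'(j)$ onto opposite sides of the threshold $n$ and hence $P(j) \neq P'(j)$, contradicting the choice of $j_0$; so $S_\lmb \cap (j_0, N] = S_\mu \cap (j_0, N]$, which together with $|S_\lmb| = |S_\mu| = n$ yields $|S_\lmb \cap \{1, \ldots, j_0\}| = |S_\mu \cap \{1, \ldots, j_0\}|$. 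The explicit formula $P(j_0) = n + j_0 - |S_\lmb \cap \{1, \ldots, j_0\}|$ (and its $\mu$-analogue) then forces $P(j_0) = P'(j_0)$, the desired contradiction. Chaining $\lmb \geq \mu \iff Q \rlex\leq Q' \iff P \rlex\leq P' \iff \rho - \lmb \geq' \rho - \mu$ then completes the proof.
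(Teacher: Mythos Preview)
Your proposal is correct and follows essentially the same approach as the paper. Both arguments reduce to comparing the inverse permutations $\perm(\rho+(N-n)\one-\lmb)^{-1}$ and $w^{-1}$ (after a suitable shift into $\nonneg^n$ and a choice of large $N$), and both hinge on the same counting argument: at the critical disagreement position the two inverse values cannot simultaneously lie in the block $\{n+1,\ldots,N\}$, because agreement to the right forces $|S_\lmb\cap\{1,\ldots,j_0\}|=|S_\mu\cap\{1,\ldots,j_0\}|$ and hence equality of the two values there. Your packaging is somewhat cleaner than the paper's: by introducing the intermediary $Q=(w_0^N w)^{-1}$ and the block-reversing involution $\sigma$ you get $Q=\sigma\circ P$ and prove the biconditional in one stroke, whereas the paper writes out an explicit piecewise formula for $w'^{-1}(p)$ in terms of $w^{-1}(N+1-p)$, proves only the ``only if'' direction by a three-case analysis (the impossible case being exactly your ``both in the reversal block''), and appeals to symmetry for the converse. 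The mathematical content is the same.
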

\begin{proof}
We may assume $|\lmb|=|\mu|$. 
We only need to prove the ``only if'' direction since the other implication follows by exchanging $\lmb$ and $\mu$. 
Take integers $L$ and $M$ so that $\lmb+L\one, \mu+L\one, \rho-\lmb+M\one, \rho-\mu+M\one \in \nonneg^n$. 
Let $w=\perm(\lmb+L\one), v=\perm(\mu+L\one), w'=\perm(\rho-\lmb+M\one)$ and $v'=\perm(\rho-\mu+M\one)$. 
Then $w, v, w', v' \in S_\infty^{(n)} \cap S_N$, and these permutations are related by $w'(i)=N+1-w(i), v'(i)=N+1-v(i)$ for $i=1, \ldots, n$, where $N=n+L+M$. 
Thus, for $p \in \{1, \ldots, N\}$, $w'^{-1}(p) \leq n$ (resp. $v'^{-1}(p) \leq n$) if and only if $w^{-1}(N+1-p) \leq n$ (resp. $v^{-1}(N+1-p) \leq n$), and 
$w'^{-1}(p)=
\begin{cases}
w^{-1}(N+1-p) & (w^{-1}(p) \leq n) \\
n+N+1-w^{-1}(N+1-p) & (w^{-1}(p) > n)
\end{cases}$
and
$v'^{-1}(p)=
\begin{cases}
v^{-1}(N+1-p) & (v^{-1}(p) \leq n) \\
n+N+1-v^{-1}(N+1-p) & (v^{-1}(p) > n)
\end{cases}$. 

Now assume $w \lex\leq v$. if $w=v$ we have nothing to prove so we assume that there is an $i$ such that $w^{-1}(1)=v^{-1}(1), \ldots, w^{-1}(i-1)=v^{-1}(i-1), w^{-1}(i)<v^{-1}(i)$. 
By the above description of $w'$ and $v'$ it is clear that $w'^{-1}(j) = v'^{-1}(j)$ for $j > N+1-i$. 
We show $w'^{-1}(N+1-i) < v'^{-1}(N+1-i)$. 
If $w^{-1}(i)<v^{-1}(i) \leq n$ we have $w'^{-1}(N+1-i)=w^{-1}(i)<v^{-1}(i)=v'^{-1}(N+1-i)$. 
If $w^{-1}(1) \leq n < v^{-1}(i)$ we have $w'^{-1}(N+1-i) \leq n < v'^{-1}(N+1-i)$.
The case $n < w^{-1}(i)<v^{-1}(i)$ cannot occur, 
since in such case $w^{-1}(i)=n+1+\#\{j<i : w^{-1}(j)>n\}$, $v^{-1}(i)=n+1+\#\{j<i : v^{-1}(j)>n\}$ and $\{j<i : w^{-1}(j)>n\}=\{j<i : v^{-1}(j)>n\}$. 
Thus we have checked $w'^{-1}(N+1-i) < v'^{-1}(N+1-i)$ and thus $w'^{-1} \rlex\leq v'^{-1}$. This shows the lemma. 
\end{proof}

\begin{lem}
\label{linord}
For any $\lmb \in \ZZ^n$, the set $\{\nu : \nu \leq \lmb\}$ is finite and linearly ordered by $\leq$. 
\end{lem}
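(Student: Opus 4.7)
The plan is to reduce finiteness to the standard fact that $\{\mu \in \nonneg^n : |\mu| = S\}$ is finite for any $S \in \ZZ$, and to reduce linear ordering to the fact that lex order is total on $S_\infty$. The bridge between the two is the claim that every $\nu \leq \lmb$ must satisfy $\min_i \nu_i \geq \min_i \lmb_i$, so that a single shift $k\one$ (e.g.\ with $k = -\min_i \lmb_i$) works for all such $\nu$ simultaneously.

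The main step is this minimum-coordinate claim, which I would prove by contradiction. Suppose $\nu \leq \lmb$ but $\min_i \nu_i < \min_i \lmb_i$, and pick $k' = -\min_i \nu_i$, so that $\nu + k'\one \in \nonneg^n$ has at least one zero coordinate while $\lmb + k'\one$ has every coordinate $\geq 1$. Writing $\lmb + k'\one = \mu + \one$ with $\mu \in \nonneg^n$, the explicit shift description $\perm(\mu + \one) = [\mu_1+1, \dots, \mu_n+1, 1, \mu_{n+1}+1, \dots]$ recalled just before the lemma gives $\perm(\lmb + k'\one)^{-1}(1) = n+1$. On the other hand, if $(\nu + k'\one)_j = 0$ for some $j \leq n$, then by the defining property of $\perm$ the value $\perm(\nu + k'\one)(j)$ is the minimum of all values at positions $\geq j$, which forces $1$ to appear at some position $\leq j \leq n$ in $\perm(\nu + k'\one)$. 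Hence $\perm(\nu + k'\one)^{-1}(1) \leq n < n+1 = \perm(\lmb + k'\one)^{-1}(1)$, giving $\perm(\lmb + k'\one)^{-1} \lex> \perm(\nu + k'\one)^{-1}$, i.e.\ $\lmb < \nu$, contradicting $\nu \leq \lmb$.

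Once the claim is in hand, fix $k = -\min_i \lmb_i$: the map $\nu \mapsto \nu + k\one$ embeds $\{\nu : \nu \leq \lmb\}$ into the finite set $\{\mu \in \nonneg^n : |\mu| = |\lmb + k\one|\}$, giving finiteness, and composing with $\mu \mapsto \perm(\mu)^{-1}$ (which is injective since the Lehmer code determines the permutation) pulls back the total lex order on $S_\infty$, so the induced order is linear. I do not anticipate any substantive obstacle: all of the content is concentrated in the minimum-coordinate claim, whose proof reduces to a single lex comparison at the leading index.
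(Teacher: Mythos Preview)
Your proposal is correct and follows essentially the same route as the paper: both argue that $\nu \leq \lmb$ forces $\min_i \nu_i \geq \min_i \lmb_i$ by comparing the position of $1$ in $\perm(\cdot)$ after a shift, then deduce finiteness from the finiteness of $\{\mu \in \nonneg^n : |\mu| = |\lmb + k\one|\}$, with linearity immediate from the totality of lex order. One small slip: in your shift description you wrote $\perm(\mu+\one) = [\mu_1+1,\ldots]$, but it should read $[w(1)+1,\ldots,w(n)+1,1,\ldots]$ with $w=\perm(\mu)$; the conclusion $\perm(\lmb+k'\one)^{-1}(1)=n+1$ you draw from it is nonetheless correct.
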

\begin{proof}
Linear-orderedness is clear from the definition of $\leq$. 
We claim that if $\lmb, \mu \in \ZZ^n$, $|\lmb| = |\mu|$ and $\min_i \lmb_i > \min_i \mu_i$ then $\lmb < \mu$
(this shows the lemma since there exists only finitely many $\nu \in \ZZ^n$
such that $|\nu|=|\lmb|$ and $\min_i \nu_i \geq \min_i \lmb_i$). 
We may assume that $\lmb, \mu \in \nonneg^n$. Let $m=\min_i \mu_i$. 
Then $w=\perm(\lmb)$ and $v=\perm(\mu)$ satisfy
$w^{-1}(1)=v^{-1}(1)=n+1, \ldots, w^{-1}(m)=v^{-1}(m)=n+m$ and $w^{-1}(m+1) > n \geq v^{-1}(m+1)$. 
Thus $w^{-1} \lex\geq v^{-1}$. 
\end{proof}

We define $\catc_{\leq \lmb}=\catc_{\{\nu : \nu \leq \lmb\}}$, $\catc_{< \lmb}=\catc_{\{\nu : \nu < \lmb\}}$
and $\catc_{\leq' \lmb}=\catc_{\{\nu : \nu \leq' \lmb\}}$. 
The main result of this section is the following proposition: 
\begin{prop}
For $\lmb \in \ZZ^n$, the modules $\smod_\lmb$ and $\smod_{\rho-\lmb}^* \otimes K_\rho$ are in $\catc_{ \leq \lmb }$, 
Moreover, $\smod_\lmb$ is projective and $\smod_{\rho-\lmb}^* \otimes K_\rho$ is injective in $\catc_{ \leq \lmb }$. 
\label{projinj}
\end{prop}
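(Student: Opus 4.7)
The proof splits into three steps: (1) show that both $\smod_\lmb$ and $\smod_{\rho-\lmb}^* \otimes K_\rho$ lie in $\catc_{\leq \lmb}$; (2) deduce projectivity of $\smod_\lmb$; (3) deduce injectivity of $\smod_{\rho-\lmb}^* \otimes K_\rho$ via the contravariant equivalence $\catc_\Lmb \cong \catc_{\rho-\Lmb}^{\mathrm{op}}$.

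For step (1) my plan is to prove the slightly stronger combinatorial fact that every weight $\mu$ of $\smod_\lmb$ satisfies \emph{both} $\mu \leq \lmb$ and $\mu \leq' \lmb$. I would induct on $(\ell(w), \schub_w(1))$ using the transition formula (Proposition~\ref{transition}): writing $\schub_w = x_j\schub_v + \sum_a \schub_{\wa}$ with $\inv(v) = \lmb - \epsilon_j$ and $\inv(\wa) = \lmb + (m_{i_a j}(v)+1)(\epsilon_{i_a} - \epsilon_j)$, the induction hypothesis and transitivity reduce the claim to two direct comparisons in the $\leq$-order: (i) $\nu \leq \inv(v) \Rightarrow \nu + \epsilon_j \leq \lmb$, and (ii) $\inv(\wa) \leq \lmb$; the analogous $\leq'$ statements follow by the symmetric version of the same argument. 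For the second module, its weights are $\rho-\mu$ for $\mu$ a weight of $\smod_{\rho-\lmb}$, and by Lemma~\ref{twoorder} the condition $\mu \leq' \rho - \lmb$ is equivalent to $\rho - \mu \leq \lmb$, so membership in $\catc_{\leq \lmb}$ follows from the $\leq'$ version of the induction.

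For step (2), $\smod_\lmb$ is cyclic with one-dimensional $\lmb$-weight space, so its head is $K_\lmb$, and Lemma~\ref{enoughlem} yields a surjection $P_\lmb^{\leq\lmb} \surj \smod_\lmb$ from the projective cover of $K_\lmb$ in $\catc_{\leq\lmb}$. I would prove this is an isomorphism using Theorem~\ref{mainthm}: $\smod_\lmb \cong \ualg(\borel)/I_\lmb$ with $I_\lmb$ generated by $h - \langle h,\lmb\rangle$ and the elements $e_{ij}^{m_{ij}(w)+1}$. Since $P_\lmb^{\leq\lmb}$ is the quotient of $\ualg(\borel)$ by $\langle h - \langle h,\lmb\rangle\rangle$ together with the submodule generated by all weight spaces of weight $\not\leq \lmb$, the identification $\smod_\lmb \cong P_\lmb^{\leq\lmb}$ reduces to two inclusions: step (1) implies that $I_\lmb$ already kills all weight spaces with weight $\not\leq \lmb$ (since the quotient $\smod_\lmb$ has all weights $\leq \lmb$), and conversely I must verify that each generator $e_{ij}^{m_{ij}+1}$ has weight $\lmb + (m_{ij}+1)(\epsilon_i - \epsilon_j) \not\leq \lmb$, so this generator is killed in $P_\lmb^{\leq\lmb}$.

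For step (3), the duality $M \mapsto M^* \otimes K_\rho$ identifies $\catc_{\leq\lmb}^{\mathrm{op}}$ with $\catc_{\rho-\{\nu:\nu\leq\lmb\}} = \catc_{\{\nu': \nu' \geq' \rho-\lmb\}}$ by Lemma~\ref{twoorder}; injectivity of $\smod_{\rho-\lmb}^* \otimes K_\rho$ in $\catc_{\leq\lmb}$ is thus equivalent to projectivity of $\smod_{\rho-\lmb}$ in this dual category, which follows by repeating the argument of step (2) with $\leq'$ in place of $\leq$ throughout. The main obstacle lies in the combinatorial verifications underlying steps (1)(ii) and (2), namely that $\inv(\wa) \leq \lmb$ and that $\lmb + (m_{ij}+1)(\epsilon_i - \epsilon_j) \not\leq \lmb$; both reduce to subtle claims about how a perturbation of a code by a multiple of $\epsilon_i - \epsilon_j$ alters the lex ordering of the inverse permutation, and the value $m_{ij}+1$ is calibrated exactly so these comparisons land on opposite sides of the threshold, but the case analysis based on the relative values of $w(i), w(j)$ and intermediate $w(r)$'s must be carried out with care.
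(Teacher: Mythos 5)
Your three-step plan has the right overall shape and the ideas in steps~(2) and~(3) are essentially those of the paper: Theorem~\ref{mainthm} identifies $\smod_\lmb$ with $\ualg(\borel)/I_\lmb$, and showing $\smod_\lmb \cong P_\lmb^{\leq\lmb}$ reduces to the combinatorial claim that each weight $\lmb+(\mij(w)+1)(\epsilon_i-\epsilon_j)$ is $\not\leq\lmb$ (and its $\leq'$-analogue for the dual); the paper carries out exactly this in its facts~(3) and~(4), phrased as a statement about $\Ext^1(\smod_\lmb,K_\mu)$ via the projective presentation $P_1 \to P_0 \to \smod_\lmb \to 0$. You acknowledge but do not carry out this verification, and for the $\leq'$-version you also omit a needed ingredient: the paper uses Remark~\ref{strict_triag} to restrict to pairs $(p,q)$ satisfying condition $(*)$ (so that $e_{pq}^{\mpq+1}$ is a non-redundant generator of $I_\lmb$), and the inequality in $\leq'$ is only established for such pairs. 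Without invoking that remark, showing $\lmb+(\mpq+1)(\epsilon_p-\epsilon_q)\not\leq'\lmb$ for \emph{all} $p<q$ is not obviously available.

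The more serious gap is in step~(1). Your reduction~(i), that $\nu\leq\inv(v)$ implies $\nu+\epsilon_j\leq\inv(v)+\epsilon_j$, is \emph{not} a ``direct comparison'' but a monotonicity claim about $\leq$ under translation by $\epsilon_j$, and this claim is false in general. For example $\lmb=(0,2,0)$ and $\mu=(1,0,1)$ in $\ZZ^3$ satisfy $\lmb>\mu$ (inverse permutations $[1,3,4,2]$ and $[2,1,4,3]$), but $\lmb+\epsilon_1=(1,2,0)$ and $\mu+\epsilon_1=(2,0,1)$ satisfy $\lmb+\epsilon_1<\mu+\epsilon_1$ (inverse permutations $[3,1,4,2]$ and $[2,4,1,3]$). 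So the induction does not close without further argument restricting to the $\nu$ that actually arise as weights of $\smod_v$ and to the specific $j$ produced by transition. The paper sidesteps this entirely by proving facts~(1) and~(2) with a direct ``$w$-pattern'' argument: every weight of $\smod_w$ is the weight of a sequence $(I_1,I_2,\ldots)$ with $I_j\subset\{1,\ldots,j-1\}$, $|I_j|=l_j(w)$, and one shows $\perm(\lmb)^{-1}\lex\leq\perm(\mu)^{-1}$ (and the $\rlex$ version) by an induction that strips off the position of the smallest (resp.\ largest) value. That route avoids any claim about translation-monotonicity of $\leq$ and is where your proposal should be revised.
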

Note that, by the remark before Lemma \ref{enoughlem}, the last claim is equivalent to the claim that 
$\smod_{\rho-\lmb}$ is projective in $\catc_{\{\rho-\nu : \nu \leq \lmb\}}=\catc_{\leq' \rho-\lmb}$. 
Moreover, since the head of $\smod_\lmb$ is $K_\lmb$, 
this proposition claims that $\smod_\lmb$ is the projective cover of $K_\lmb$ in both $\catc_{\leq \lmb}$ and $\catc_{\leq' \lmb}$, 
i.e., $\smod_\lmb \cong P_\lmb^{\leq \lmb} \cong P_\lmb^{\leq' \lmb}$
(we write $P_\mu^{\leq \lmb}$ and $P_\mu^{\leq' \lmb}$ for $P_\mu^{\{\nu : \nu \leq \lmb\}}$ and $P_\mu^{\{\nu : \nu \leq' \lmb\}}$ respectively). 
We also remark that the proposition implies the same statement for $\leq'$ instead of $\leq$, by Lemma \ref{twoorder}. 

To prove Proposition \ref{projinj}, 
we have to prove the following four facts: for every $\lmb, \mu \in \ZZ^n$, 
\begin{enumerate}
\item $(\smod_{\lmb})_\mu \neq 0$ implies $\lmb \ge \mu$, 
\item $(\smod_{\rho-\lmb}^*\otimes K_\rho)_\mu \neq 0$ (which is equivalent to $(\smod_{\rho-\lmb})_{\rho-\mu} \neq 0$) implies $\lmb \ge \mu$, 
\item $\Ext^1(\smod_{\lmb}, K_\mu) \neq 0$ implies $\lmb < \mu$
(here $\Ext^1$ is taken in either $\catc$ or $\catc_{\leq \lmb}$, which does not matter since $\catc_{\leq \lmb}$ is closed under extension), and 
\item $\Ext^1(\smod_{\rho-\lmb}, K_{\rho-\mu}) \neq 0$ implies $\lmb < \mu$. 
\end{enumerate}

Before starting the proof, first let us make a observation on the weights of $\smod_w$ ($w \in S_\infty^{(n)}$). 
Let $l_j(w) = \#\{i : i<j, w(i)>w(j)\}$ as in the definition of KP modules. 
Since $\smod_w$ is a submodule of $\bigotimes_{j \geq 1} \bigwedge^{l_j(w)} K^{j-1}$, 
any weight of $\smod_w$ is a weight of $\bigotimes_{j \geq 1} \bigwedge^{l_j(w)} K^{j-1}$. 
The weights of the latter space can be understood as follows. A \textit{$w$-pattern} (terminology only for here) is a sequence of sets $(I_1, I_2, \ldots)$ such that $I_j \subset \{1, \ldots, j-1\}$ and $|I_j|=l_j(w)$. Define the \textit{weight} $(\mu_1, \mu_2, \ldots)$ of a $w$-pattern $(I_1, I_2,  \ldots)$ by $\mu_i=\#\{j : i \in I_j\}$. Then it is easy to see that $\mu$ is a weight of $\bigotimes_{j \geq 1} \bigwedge^{l_j(w)} K^{j-1}$ if and only if it is the weight of some $w$-pattern. 

Let us now prove (1) and (2) above. 

\vspace{1ex}
(1): 
We may assume that $\lmb$ and $\mu$ are in $\nonneg^n$, since $(\smod_\lmb)_\mu \neq 0 \iff (\smod_{\lmb+k\one})_{\mu+k\one} \neq 0$ for any $\lmb, \mu \in \ZZ^n$ and any $k \in \ZZ$. 
Let $w=\perm(\lmb)$ and $v=\perm(\mu)$. 
We prove a stronger statement: if $\mu$ is the weight of some $w$-pattern $(I_1, I_2, \ldots)$ then $\lmb \geq \mu$. 

We first show $w^{-1}(1) \leq v^{-1}(1)$. 
Let $i=w^{-1}(1)$. 
Since $w(1), \ldots, w(i-1) > w(i)$ we have $l_i(w)=i-1$, and thus $I_{i}=\{1, \ldots, i-1\}$. 
Thus $\mu_1, \ldots, \mu_{i-1} \geq 1$. 
Since $v^{-1}(1)=\min\{j : \mu_j = 0\}$, this shows $w^{-1}(1) \leq v^{-1}(1)$. 

Now consider the case $w^{-1}(1)=v^{-1}(1)$. 
In this case we have $\mu_i=0$, i.e. none of the sets $I_j$ contains $i$. 
Define $\sigma_i: \NN \smallsetminus \{i\} \mor \NN$ by $\sigma_i(i')=\begin{cases}i' & (i'<i) \\ i'-1 & (i'>i) \end{cases}$, and consider a new sequence of sets $I'=(\sigma_i(I_1), \ldots, \sigma_i(I_{i-1}), \sigma_i(I_{i+1}), \sigma_i(I_{i+2}), \ldots)$. 
It is easy to check that $I'$ is a $w'$-pattern with weight $\inv(v')$, where $w'=[w(1)-1 \; \cdots \; w(i-1)-1 \; w(i+1)-1 \; w(i+2)-1 \cdots]$ and $v'=[v(1)-1 \; \cdots \; v(i-1)-1 \; v(i+1)-1 \; v(i+2)-1 \cdots]$. An inductive argument shows that $w'^{-1} \lex\leq v'^{-1}$. This shows $w^{-1} \lex\leq v^{-1}$. \hfill $\Box$

\vspace{1ex}
(2):
We may assume $\lmb, \mu \in \nonneg^n$ as before. Let $w=\perm(\lmb)$ and $v=\perm(\mu)$. 
We prove a stronger statement: if $\mu$ is the weight of some $w$-pattern $(I_1, I_2, \ldots)$ then $\rho-\lmb \geq \rho-\mu$, or equivalently (by Lemma \ref{twoorder}), $\lmb \geq' \mu$. 
Take $N$ so that $w, v \in S_N$. Note that $I_{N+1}=I_{N+2}=\cdots=\varnothing$ since $l_w(N+1)=l_w(N+2)=\cdots=0$. 

We first show $w^{-1}(N) \leq v^{-1}(N)$. 
Let $i=w^{-1}(N)$.
Then we have $l_i(w)=0$ and thus $I_i=\varnothing$. 
Thus for $j<i$, we have $j \not\in I_1, \ldots I_j$ and $j \not\in I_i$. Thus $\mu_j \leq N-j-1$. 
Since $v^{-1}(N)=\min\{i : \mu_i = N-i\}$ this shows $v^{-1}(N) \geq w^{-1}(N)$. 

Now consider the case $w^{-1}(N)=v^{-1}(N)$. Then $\mu_i=N-i$. 
Since $i \not\in I_1, \ldots, I_i$ we must have $i\in I_{i+1}, \ldots I_N$. 
It is easy to see that  $I'=(\sigma_i(I_1), \ldots, \sigma_i(I_{i-1}), \sigma_i(I_{i+1} \smallsetminus \{i\}), \ldots, \sigma_i(I_N \smallsetminus \{i\}), \varnothing, \varnothing, \ldots)$ is a $w'$-pattern with weight $\inv(v')$ where $w'=[w(1) \, \cdots \, w(i-1) \, w(i+1) \, \cdots \, w(N)]$ and  $v'=[v(1) \, \cdots \, v(i-1) \, v(i+1) \, \cdots \, v(N)]$. 
An inductive argument shows $w'^{-1} \rlex\leq v'^{-1}$. This shows $w^{-1} \rlex\leq v^{-1}$.  \hfill $\Box$

\vspace{1ex}
For (3) and (4), we need the following observation. 
By Theorem \ref{mainthm}, for any $w \in S_\infty^{(n)}$ there is a projective resolution of $\smod_w$ in $\catc$ of the form $\cdots \mor P_1 \mor P_0 \mor \smod_w \mor 0$
with $P_0=P_{\inv(w)}$ and $P_1=\bigoplus_{p<q} P_{\inv(w)+(\mpq(w)+1)(\epsilon_p-\epsilon_q)}$. 
Here by Remark \ref{strict_triag}, we can in fact replace $P_1$ by a smaller module: sum over all $p<q$ such that \[(*): \text{there does not exist $p<r<q$ with $\mpq(w)=m_{pr}(w)+m_{rq}(w)$}.\]
In particular, $\Ext^1(\smod_w, K_\mu)=0$ unless $\mu=\inv(w)+(\mpq(w)+1)(\epsilon_p-\epsilon_q)$ for some $p<q$ satisfying the property $(*)$ above. 

\vspace{1ex}
(3): We may assume that $\lmb, \mu \in \nonneg^n$, since $\lmb < \mu \iff \lmb+k\one < \mu+k\one$ and $\Ext^1(\smod_\lmb, K_\mu) \neq 0 \iff \Ext^1(\smod_{\lmb+k\one}, K_{\mu+k\one}) \neq 0$ for any $\lmb, \mu \in \ZZ^n$ and any $k \in \ZZ$. 

Let $w=\perm(\lmb)$ and $v=\perm(\mu)$. 
By the remark above, we have $\mu=\lmb+(\mpq(w)+1)(\epsilon_p-\epsilon_q)$ for some $p<q$ (and therefore $w \neq v$). 
We first show $w^{-1}(1) \geq v^{-1}(1)$. 
Let $i=w^{-1}(1)$. 
If $i < v^{-1}(1)$, then $\mu_i>0$ while $\lmb_i=0$, 
and so $p=i$. 
But then $m_{pq}(w)=\inv(w)_q=\lmb_q$ and so we have $\mu_q=-1$, which contradicts to $\mu \in \nonneg^n$. 
Therefore $i \geq v^{-1}(1)$. 

If $i=v^{-1}(1)$, then $\lmb_i=\mu_i=0$, and so $p, q \neq i$. 
Therefore $\lmb'=(\lmb_1-1, \ldots, \lmb_{i-1}-1, \lmb_{i+1}, \lmb_{i+2}, \ldots)$
and $\mu'=(\mu_1-1, \ldots, \mu_{i-1}-1, \mu_{i+1}, \mu_{i+2}, \ldots)$ 
satisfy $\mu'=\lmb'+(m_{pq}(w)+1)(\epsilon_{p'}-\epsilon_{q'})$ for $p'=\sigma_i(p)$, $q'=\sigma_i(q)$. 
Moreover, $m_{pq}(w) = m_{p'q'}(w')$, where $w'=[w(1)-1 \; \cdots \; w(i-1)-1 \; w(i+1)-1 \; w(i+2)-1 \; \cdots]$. 
Thus an inductive argument shows $w'^{-1} \lex\geq v'^{-1}$ where $v'=[v(1)-1 \; \cdots \; v(i-1)-1 \; v(i+1)-1 \; v(i+2)-1 \; \cdots]$. This shows $w^{-1} \lex\geq v^{-1}$. \hfill $\Box$

\vspace{1ex}
(4): We assume $\Ext^1(\smod_{\lmb}, K_{\mu}) \neq 0$ and prove $\rho-\lmb < \rho-\mu$, or equivalently, $\lmb <' \mu$. 
We may assume that $\lmb, \mu \in \nonneg^n$ as before. 
Let $w=\perm(\lmb)$, $v=\perm(\mu)$. 
Take $N$ so that $w, v \in S_N$. 
We have $\mu=\lmb+(\mpq(w)+1)(\epsilon_p-\epsilon_q)$ for some $p<q$ as before, 
with the property $(*)$ remarked above. 
We first show $w^{-1}(N) \geq v^{-1}(N)$. 

Assume $w^{-1}(N)<v^{-1}(N)$. 
Then $\lmb_{w^{-1}(N)}=N-w^{-1}(N)$ while $\mu_{w^{-1}(N)}<N-{w^{-1}(N)}$ and so $q=w^{-1}(N)$. 

We first claim that there does not exist $r$ such that $p<r<q$ and $w(p)<w(r)$. 
Suppose such $r$ exists. Take $r$ to be the largest among such. 
By the property $(*)$ we have $m_{pq}(w)<m_{pr}(w)+m_{rq}(w)$. 
This means that there is a column index $1 \leq j \leq N$
such that $(p,j), (q,j) \in I(w), (r,j) \not\in I(w)$ or $(p,j), (q,j) \not\in I(w), (r,j) \in I(w)$, 
since other types of column contribute to LHS and RHS by the same value. 
We see that neither of these cases cannot occur as follows. 
\begin{itemize}
\item Assume the former case. Then $(p,j) \in I(w)$ implies $w(j)<w(p)<w(r)$ and $(q,j) \in I(w)$ implies $j>q>r$. These shows $(r,j) \in I(w)$. Contradiction. 
\item Assume the latter case. $w(q) = N > w(j)$ and $(q,j) \not\in I(w)$ implies $j<q$. Also, $(r,j) \in I(w)$ implies $j>r>p$, and this together with $(p,j) \not\in I(w)$ shows $w(p)<w(j)$. Thus $j$ satisfies $p<j<q$, $w(p)<w(j)$ and $j>r$. This contradicts to the choice of $r$. 
\end{itemize}

Since there does not exist $r$ such that $p<r<q$ and $w(p)<w(r)$, 
we see that $m_{pq}(w) = \#\{r > q : w(p)<w(r)<w(q)\} = \#\{r > q : w(p)<w(r)\} = N-w(p)-1-\#\{r<q : w(p)<w(r)\} =  N-w(p)-1-\#\{r<p : w(p)<w(r)\}$. 
From this and $\lmb_p=\inv(w)_p=\#\{r>p : w(r)<w(p)\}=w(p)-1-\#\{r<p : w(r)<w(p)\}$, 
we see $\mu_p=\lmb_p+m_{pq}(w)+1=N-p$. 
This means $v^{-1}(N)=\min\{p' : \mu_{p'} = N-p'\} \leq p < q = w^{-1}(N)$. This contradicts to the assumption and thus we see $w^{-1}(N) \geq v^{-1}(N)$. 

If $w^{-1}(N)=v^{-1}(N)$, then $p \neq w^{-1}(N) \neq q$ as before, and we can inductively argue in the same way as in (3). \hfill $\Box$

\section{Vanishing of higher extensions}
\label{higherext}
In this section, we prove an analogue of ``Strong form of Polo's theorem'' (\cite[Theorem 3.2.2]{vdK}) for KP modules: i.e. the vanishing of higher extensions $\Ext^i(\smod_\lmb, \smod_\mu^* \otimes K_\rho)$ ($i \geq 1$).
To prove it we need the following lemma: 
\begin{lem}
For any $\lmb \geq \lmb'$, $M, N \in \catc_{\leq \lmb'}$ and $i \geq 0$, $\Ext^i_{\leq \lmb}(M,N) \cong \Ext^i_{\leq \lmb'}(M,N)$. 
Here $\Ext^i_{\leq \lmb}$ is short for $\Ext^i_{\catc_{\leq \lmb}}$. 
\label{sugoilem}
\end{lem}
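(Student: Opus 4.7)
The plan is to induct on the cardinality of $\{\nu : \lambda' < \nu \leq \lambda\}$, which is finite by Lemma \ref{linord}. The base case $\lambda = \lambda'$ is trivial, and the induction step reduces to the situation where $\lambda$ is the immediate successor of $\lambda'$ in the linear order on $\{\nu : \nu \leq \lambda\}$; the general case then follows by interpolating through immediate predecessors of $\lambda$ and invoking the inductive hypothesis one step at a time. In the reduced case, $\catc_{\leq \lambda}$ differs from $\catc_{\leq \lambda'}$ only by permitting the single extra weight $\lambda$, and the task becomes to show that this enlargement is invisible for Ext between $\catc_{\leq \lambda'}$-modules.

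The main ingredient is Proposition \ref{projinj}: $\smod_\lambda$ is the projective cover of $K_\lambda$ in $\catc_{\leq \lambda}$, hence projective there. Since $\Hom_{\leq \lambda}(\smod_\lambda, N) = N_\lambda = 0$ for every $N \in \catc_{\leq \lambda'}$, combining with projectivity yields $\Ext^j_{\leq \lambda}(\smod_\lambda, N) = 0$ for all $j \geq 0$, so $\smod_\lambda$ behaves as a zero object for Ext with $\catc_{\leq \lambda'}$-targets. I would then run a secondary induction on $i$: given $M \in \catc_{\leq \lambda'}$, take a surjection $P \twoheadrightarrow M$ from a projective of $\catc_{\leq \lambda'}$ with kernel $M'$, and apply the long exact sequences for $\Hom(-, N)$ in both categories. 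Provided one can establish $\Ext^j_{\leq \lambda}(P, N) = 0$ for $j \geq 1$, dimension shift identifies $\Ext^i_{\leq \lambda}(M, N) \cong \Ext^{i-1}_{\leq \lambda}(M', N)$, which by the secondary induction equals $\Ext^{i-1}_{\leq \lambda'}(M', N) \cong \Ext^i_{\leq \lambda'}(M, N)$.

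To establish the needed vanishing $\Ext^j_{\leq \lambda}(P^{\leq \lambda'}_\mu, N) = 0$ for $\mu \leq \lambda'$, compare with $P^{\leq \lambda}_\mu$. The natural surjection $P^{\leq \lambda}_\mu \twoheadrightarrow P^{\leq \lambda'}_\mu$ has kernel $K$ generated, as a $\borel$-module, by its weight-$\lambda$ subspace; using the projective-cover property of $\smod_\lambda$ one produces a surjection $\smod_\lambda^{\oplus d} \twoheadrightarrow K$ (with $d = \dim K_\lambda$) which is an isomorphism on weight $\lambda$. The kernel $K'$ of this latter surjection thus has no weight-$\lambda$ component, and the remaining weights of $\smod_\lambda$ are strictly below $\lambda$, hence $\leq \lambda'$ under the immediate-successor reduction, so $K' \in \catc_{\leq \lambda'}$. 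Splicing the long exact sequences from $0 \to K \to P^{\leq \lambda}_\mu \to P^{\leq \lambda'}_\mu \to 0$ and $0 \to K' \to \smod_\lambda^{\oplus d} \to K \to 0$, together with the $\smod_\lambda$-vanishing and the projectivity of $P^{\leq \lambda}_\mu$ in $\catc_{\leq \lambda}$, reduces the vanishing for $P^{\leq \lambda'}_\mu$ to the analogous statement for $K'$ in lower cohomological degree; the secondary induction, together with the observation that the relevant modules all have finite weight support by Lemma \ref{linord}, closes the argument.

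The principal obstacle is keeping the two nested inductions in synchrony --- in particular, verifying that the cascade of kernels $K, K', \ldots$ produced by iterating the comparison remains inside $\catc_{\leq \lambda'}$ and descends to a configuration in which the secondary induction hypothesis is directly applicable. The immediate-successor reduction is essential here, because it is exactly what forces every weight strictly less than $\lambda$ to be $\leq \lambda'$, and hence guarantees that $K'$ is a $\catc_{\leq \lambda'}$-module rather than merely a $\catc_{\leq \lambda}$-module with unspecified weights.
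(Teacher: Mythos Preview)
Your reduction to the immediate-successor case and the use of Proposition~\ref{projinj} to get $\Ext^j_{\leq\lambda}(\smod_\lambda,N)=0$ for all $j$ are fine, and the d\'evissage through the two short exact sequences is correctly set up. The problem is the last sentence: the reduction does \emph{not} close. Chasing the long exact sequences as you describe gives
\[
\Ext^j_{\leq\lambda}(P_\mu^{\leq\lambda'},N)\;\cong\;\Ext^{\,j-2}_{\leq\lambda}(K',N)\qquad(j\geq 2),
\]
and in particular $\Ext^2_{\leq\lambda}(P_\mu^{\leq\lambda'},N)\cong\Hom(K',N)$. But $K'$ is, on the face of it, an arbitrary object of $\catc_{\leq\lambda'}$: the secondary induction hypothesis only tells you $\Ext^{j-2}_{\leq\lambda}(K',N)\cong\Ext^{j-2}_{\leq\lambda'}(K',N)$, not that these groups vanish. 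Taking $N=K'$ shows that $\Hom(K',N)=0$ for all $N$ forces $K'=0$, so what your argument actually needs is that the surjection $\smod_\lambda^{\oplus d}\twoheadrightarrow K$ is an isomorphism, i.e.\ that
\[
\Ker\bigl(P_\mu^{\leq\lambda}\twoheadrightarrow P_\mu^{\leq\lambda'}\bigr)\;\cong\;\smod_\lambda^{\oplus\dim(P_\mu^{\leq\lambda})_\lambda}.
\]
This is precisely the nontrivial point, and it is \emph{not} a formal consequence of Proposition~\ref{projinj}; it is a statement about the size of the submodule of $P_\mu^{\leq\lambda}$ generated by its $\lambda$-weight space.

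The paper proves exactly this identity by a dimension count: one shows that the natural surjection $(P_\mu^{\leq\lambda})_\lambda\otimes(\smod_\lambda)_\nu\twoheadrightarrow K_\nu$ is an isomorphism for every $\nu$, which in turn is reduced (via Proposition~\ref{projinj} and the anti-automorphism of $\ualg(\nplus)$) to a Cauchy-type identity for Schubert polynomials, namely that the coefficient of $x^{\rho-\mu}y^\nu$ in $\sum_{\kappa}\schub_{\rho-\kappa}(x)\schub_\kappa(y)$ equals $\dim\ualg(\nplus)_{\nu-\mu}$ (Lemma~\ref{sansuu}). Once $K'=0$ is known, your dimension-shift argument goes through immediately; but the combinatorial input cannot be avoided, and your proposal as written does not supply it.
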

\begin{proof}
It is enough (by Lemma \ref{linord}) to prove $\Ext^i_{\leq \lmb}(M,N)=\Ext^i_{\catc_{<\lmb}}(M,N)$ for $M,N \in \catc_{<\lmb}$. 
Take a projective resolution $\cdots \mor P_1 \mor P_0 \mor M \mor 0$ such that each $P_i$ is a direct sum of some modules $P_\mu^{\leq \lmb}$ with $\mu \leq \lmb$
(in fact, the only indecomposable projectives in $\catc_{\leq \lmb}$ are $P_\mu^{\leq \lmb}$, so this condition is superfluous). 
For $L \in \catc_{\leq \lmb}$, let $\overline{L}$ be the largest quotient of $L$ which is in $\catc_{< \lmb}$, 
i.e., $\overline{L}$ is the quotient of $L$ by the submodule generated by the weight space $L_\lmb$ of weight $\lmb$. 
Note that if $P_i=P_\mu^{\leq \lmb} \oplus P_\nu^{\leq \lmb} \oplus \cdots$, then $\overline{P_i}=P_\mu^{< \lmb} \oplus P_\nu^{< \lmb} \cdots$ 
where $P_\mu^{< \lmb}, P_\nu^{< \lmb}, \ldots$ are the largest quotients of $P_\mu, P_\nu, \ldots$ which are in $\catc_{< \lmb}$. 
We are done if we show that $\cdots \mor \overline{P_1} \mor \overline{P_0} \mor M \mor 0$ is a projective resolution of $M$, 
since $\Hom(\overline{P_i},N)=\Hom(P_i,N)$. 
It is clear that each $\overline{P_i}$ is projective. 
Let $\Ker_i$ be the kernel of $P_i \surj \overline{P_i}$. 
Since $\cdots \mor P_1 \mor P_0 \mor M \mor 0$ is exact, 
the exactness of $\cdots \mor \overline{P_1} \mor \overline{P_0} \mor M \mor 0$
is equivalent to that of $\cdots \mor \Ker_1 \mor \Ker_0 \mor 0$. 

For any $\mu, \nu \leq \lmb$, we have a linear map $(P_\mu^{\leq \lmb})_\lmb \otimes (P_\lmb^{\leq \lmb})_\nu \mor (P_\mu^{\leq \lmb})_\nu$
 defined by $x u_\mu \otimes y u_\lmb \mapsto yx u_\mu$ for $x \in \ualg(\nplus)_{\lmb-\mu}$ and $y \in \ualg(\nplus)_{\nu-\lmb}$ 
where $u_\mu$ is the image of $1 \otimes 1 \in \ualg(\nplus) \otimes K_\mu \cong P_\mu \surj P_\mu^{\leq \lmb}$
(this definition does not depend on the choice of $y$ since the submodule of $P_\mu^{\leq \lmb}$ generated by $x u_\mu$ is a quotient of $P_\lmb^{\leq \lmb}$ by definition). 
This map induces a surjection from $(P_\mu^{\leq \lmb})_\lmb \otimes (P_\lmb^{\leq \lmb})_\nu$ to $\Ker((P_\mu^{\leq \lmb})_\nu \surj (P_\mu^{< \lmb})_\nu)$, since the kernel is, by definition, generated by $(P_\mu^{\leq \lmb})_\lmb$ as a $\ualg(\borel)$-module. 
We claim that this surjection is in fact an isomorphism, for any $\lmb$ and any $\mu, \nu \leq \lmb$. 
Note that the claim implies the lemma: 
if we show this we have $(P_i)_\lmb \otimes (P_\lmb^{\leq \lmb})_\nu \cong (\Ker_i)_\nu$ for each $i$, 
and thus the exactness of $\cdots \mor \Ker_1 \mor \Ker_0 \mor 0$ follows from that of $\cdots \mor (P_1)_\lmb \mor (P_0)_\lmb \mor 0$. 

For $\lmb \in \ZZ^n$ and $\mu, \nu \leq \lmb$, we have a quotient filtration $(P_\mu^{\leq \lmb})_\nu=(P_\mu^{\leq \kap^{(r)}})_\nu \surj (P_\mu^{\leq \kap^{(r-1)}})_\nu \surj \cdots \surj (P_\mu^{\leq \kap^{(1)}})_\nu \surj 0$, where $\kap^{(1)} < \cdots < \kap^{(r)} = \lmb$ are the elements of $\ZZ^n$ less than or equal to $\lmb$. 
By the argument above, the subquotient $\Ker((P_\mu^{\leq \kap^{(i)}})_\nu \surj (P_\mu^{\leq \kap^{(i-1)}})_\nu)$ of this filtration is a quotient of $(P_\mu^{\leq \kap^{(i)}})_{\kap^{(i)}} \otimes (P_{\kap^{(i)}}^{\leq \kap^{(i)}})_\nu$. 
Thus $\dim (P_\mu^{\leq \lmb})_\nu \leq \sum_{\kap \leq \lmb} \dim ((P_\mu^{\leq \kap})_{\kap} \otimes (P_{\kap}^{\leq \kap})_\nu)$. 
If we show that the equality holds, then the desired isomorphism $(P_\mu^{\leq \kap^{(i)}})_{\kap^{(i)}} \otimes (P_{\kap^{(i)}}^{\leq \kap^{(i)}})_\nu \cong \Ker((P_\mu^{\leq \kap^{(i)}})_\nu \surj (P_\mu^{\leq \kap^{(i-1)}})_\nu)$ follows for all $i$: in particular, proving the equality for a sufficiently large $\lmb$ (with respect to the ordering $\leq$) is enough for the proof of Lemma \ref{sugoilem}. 

We know $(P_\kap^{\leq \kap})_\nu \cong (\smod_\kap)_\nu$ by Proposition \ref{projinj}. Now consider $(P_\mu^{\leq \kap})_\kap$. Since $P_\mu^{\leq \kap}$ is the quotient of $P_\mu$ by the submodule generated by all weight spaces $(P_\mu)_\sigma$ ($\sigma \not\leq \kap$), we see that
\[
(P_\mu^{\leq \kap})_\kap \cong \ualg(\nplus)_{\kap-\mu}/\mathrm{Span}_K \{ xy : \text{$x \in \ualg(\nplus)_{\kap-\sigma}$, $y \in \ualg(\nplus)_{\sigma-\mu}$ for some $\sigma \not\leq \kap$} \}.
\]
The algebra antiautomorphism on $\ualg(\nplus)$ given by $X \mapsto X$ ($X \in \nplus$) induces an isomorphism between this space and
\begin{align*}
&\ualg(\nplus)_{\kap-\mu}/\mathrm{Span}_K \{ yx : \text{$x \in \ualg(\nplus)_{\kap-\sigma}$, $y \in \ualg(\nplus)_{\sigma-\mu}$ for some $\sigma \not\leq \kap$} \} \\
&= \ualg(\nplus)_{\kap-\mu}/\mathrm{Span}_K \{ yx : \text{$x \in \ualg(\nplus)_{\kap-\sigma}$, $y \in \ualg(\nplus)_{\sigma-\mu}$ for some $\sigma$ s.t. $\rho-\sigma \not\leq' \rho-\kap$} \}.
\end{align*}
By the same argument as above we see that this is isomorphic to $(P_{\rho-\kap}^{\leq' \rho-\kap})_{\rho-\mu}$. By Proposition \ref{projinj} (and Lemma \ref{twoorder}) we see $(P_{\rho-\kap}^{\leq' \rho-\kap})_{\rho-\mu} \cong (\smod_{\rho-\kap})_{\rho-\mu}$. Thus, after all, we see that $(P_\mu^{\leq \kap})_\kap \cong (\smod_{\rho-\kap})_{\rho-\mu}$. 

Since as we have seen above $(P_\mu^{\leq \kap})_\kap \cong (\smod_{\rho-\kap})_{\rho-\mu}$ and $(P_\kap^{\leq \kap})_\nu \cong (\smod_\kap)_\nu$, we see that $\dim ((P_\mu^{\leq \kap})_{\kap} \otimes (P_{\kap}^{\leq \kap})_\nu)$ is equal to the coefficient of $x^{\rho-\mu} y^\nu$ in $\schub_{\rho-\kap}(x) \schub_\kap(y)$. 
Also, $\dim(P_\mu^{\leq \lmb})_\nu =\dim \ualg(\nplus)_{\nu-\mu}$ if $\lmb$ is sufficiently large with respect to $\leq$. Thus the proof of Lemma \ref{sugoilem} is now reduced to the following elementary lemma: 

\begin{lem}
For $\mu, \nu \in \ZZ^n$, $\dim \ualg(\nplus)_{\nu-\mu}$ is equal to the coefficient of $x^{\rho-\mu}y^\nu$ in $\sum_{\kap \in \ZZ^n} \schub_{\rho-\kap}(x)\schub_\kap(y)$. 
\label{sansuu}
\end{lem}
Let us prove this lemma. We use the following result from \cite{PS}: 
\begin{lem}[{{\cite[Lemma 6.2 and Corollary 9.2, reformulated]{PS}}}]
For a positive integer $N$, define a bilinear form $\langle \cdot,\cdot \rangle$ on $\ZZ[x_1^{\pm 1}, \ldots, x_N^{\pm 1}]$ by $\langle x^\alpha,x^\beta \rangle=\delta_{\alpha\beta}$. 
Then for $w, v \in S_N$, $\langle \schub_w, \schub_{w_0v}(x_1^{-1},\ldots,x_N^{-1})\prod_{1 \leq i<j \leq N}(x_i-x_j) \rangle=\delta_{wv}$, 
where $w_0 = [N \; N-1 \; \cdots \; 1] \in S_N$. 
\end{lem}
We slightly modify this lemma into a form which is more suitable for our use: 
\begin{lem}
If we define a bilinear form $\langle \cdot,\cdot \rangle$ on $\ZZ[x_1^{\pm 1}, \ldots, x_n^{\pm 1}]$ by $\langle x^\alpha,x^\beta \rangle=\delta_{\alpha\beta}$, 
then for $\lmb, \mu \in \ZZ^n$, $\langle \schub_\lmb, \schub_{\rho-\mu}(x_1^{-1}, \ldots, x_n^{-1})\prod_{1 \leq i<j \leq n}(x_i-x_j) \rangle = \delta_{\lmb\mu}$.  
\label{dualbasislem}
\end{lem}
\begin{proof}
We may assume that $\lmb, \mu \in \nonneg^n$. 
Let $w=\perm(\lmb), v=\perm(\mu)$. Take $N$ so that $w, v \in S_N$. 
Then by the previous lemma, 
we have 
\[
\langle \schub_w, \schub_{w_0v}(x_1^{-1}, \ldots, x_N^{-1})\prod_{1 \leq i<j \leq N}(x_i-x_j) \rangle = \delta_{wv}=\delta_{\lmb\mu} \quad \cdots (*)
\]
where $w_0 = [N \; N-1 \; \cdots \; 1] \in S_N$. 

Since $\prod_{1 \leq i<j \leq N}(x_i-x_j) =  \prod_{i\leq n < j}(x_i-x_j) \cdot \prod_{i<j \leq n}(x_i-x_j) \cdot \prod_{n < i < j}(x_i-x_j)$, it can be seen that 
\begin{align*}
\prod_{1 \leq i<j \leq N}(x_i-x_j) &\equiv (x_1 \cdots x_n)^{N-n} \cdot \prod_{i<j \leq n}(x_i-x_j) \cdot \prod_{n < i < j}(x_i-x_j) \\
&= (x_1 \cdots x_n)^{N-n} \cdot \prod_{i<j \leq n}(x_i-x_j) \cdot (x_{n+1}^{N-n-1} x_{n+2}^{N-n-2}\cdots x_{N-1} + R)
\end{align*}
modulo terms whose total degree in variables $x_{n+1}, \ldots, x_N$ is strictly larger than $T=\binom{N-n}{2}$, and $R$ is some polynomial in $x_{n+1}, \ldots, x_N$ with degree $T$ and without monomial $x_{n+1}^{N-n-1} x_{n+2}^{N-n-2}\cdots x_{N-1}$. 

Let $f$ be the sum of all terms in $\schub_{w_0v}$ whose degree in $x_{n+1}, \ldots, x_N$ is equal to $T$. 
Note that, since $\schub_{w_0v}$ is a linear combination of monomials $x_1^{a_1} \cdots x_n^{a_n}$ ($0 \leq a_i \leq N-i$), the degree in $x_{n+1}, \ldots, x_N$ of its terms are always at most $T$: that is, $\schub_{w_0v}=f+(\text{terms with degree $<T$ in variables $x_{n+1}, \ldots, x_N$})$. Also note $f \in x_{n+1}^{N-n-1} \cdots x_{N-1}\ZZ[x_1, \ldots, x_n]$ by the same reason. We claim $f=(x_1 \cdots x_n)^{N-n}x_{n+1}^{N-n-1} \cdots x_{N-1}\schub_{\rho-\mu}$. 

Let $w_{n,N} = [1 \; \cdots \; n \; N \; N-1 \; \cdots \; n+1] \in S_N$. Note that $\inv(w_{n,N}w_0v)=\rho-\mu+(N-n)\one$ and thus $\schub_{w_{n,N}w_0v}= (x_1 \cdots x_n)^{N-n}\schub_{\rho-\mu}$. 
We have $\schub_{w_{n,N}w_0v} = \der_{w_{n,N}} \schub_{w_0v}$, where $\der_{w_{n,N}} = (\der_{n+1} \der_{n+2} \cdots \der_{N-1}) \cdot  (\der_{n+2} \cdots \der_{N-1}) \cdot \cdots \cdot \der_{N-1}$. Since the operators $\der_i$ ($n+1 \leq i \leq N-1$) lower the degree in variables $x_{n+1}, \ldots, x_N$ by one, $\der_{w_{n,N}}$ annihilates $\schub_{w_0v}-f$. Thus $\schub_{w_{n,N}w_0v} = \der_{w_{n,N}} f$. Since $f \in x_{n+1}^{N-n-1} \cdots x_{N-1}\ZZ[x_1, \ldots, x_n]$ and $\der_{w_{n,N}} x_{n+1}^{N-n-1} \cdots x_{N-1} = 1$ we see that $\der_{w_{n,N}} f = f/(x_{n+1}^{N-n-1} \cdots x_{N-1})$. Thus $f=x_{n+1}^{N-n-1} \cdots x_{N-1}\schub_{w_{n,N}w_0v} =(x_1 \cdots x_n)^{N-n}x_{n+1}^{N-n-1} \cdots x_{N-1}\schub_{\rho-\mu}$. This shows the claim above. 

We have seen that 
\[
\prod_{1 \leq i<j \leq N}(x_i-x_j) \equiv (x_1 \cdots x_n)^{N-n} \cdot \prod_{i<j \leq n}(x_i-x_j) \cdot (x_{n+1}^{N-n-1} x_{n+2}^{N-n-2}\cdots x_{N-1} + R)
\] and
\[
\schub_{w_0v}(x_1^{-1}, \ldots, x_N^{-1}) \equiv (x_1 \cdots x_n)^{-N+n}x_{n+1}^{-N+n+1} \cdots x_{N-1}^{-1} \cdot \schub_{\rho-\mu}(x_1^{-1}, \cdots, x_n^{-1})
\] modulo terms having degrees $>T$ and $>-T$ in variables $x_{n+1}, \ldots, x_N$ respectively. 
Thus $\schub_{w_0v}(x_1^{-1}, \ldots, x_N^{-1})\prod_{1 \leq i<j \leq N}(x_i-x_j)$ is equal to 
\[
\schub_{\rho-\mu}(x_1^{-1}, \cdots, x_n^{-1}) \cdot \prod_{1 \leq i<j \leq n}(x_i-x_j) \cdot (1+x_{n+1}^{-N+n+1} \cdots x_{N-1}^{-1}R)
\]
modulo terms with degree $>0$ in variables $x_{n+1}, \ldots, x_N$. 
Since $x_{n+1}, \ldots, x_N$ does not appear in $\schub_w$ and $x_{N-1}^{-N+n+1} \cdots x_{N-1}^{-1}R$ does not have a constant term, this shows 
\[\langle \schub_w, \schub_{w_0v}(x_1^{-1}, \ldots, x_N^{-1})\prod_{1 \leq i<j \leq N}(x_i-x_j) \rangle = \langle \schub_w, \schub_{\rho-\mu}(x_1^{-1}, \ldots, x_n^{-1})\prod_{1 \leq i<j \leq n}(x_i-x_j) \rangle. \] This, together with $(*)$, finishes the proof of Lemma \ref{dualbasislem}. 
\end{proof}

Let us come back to the proof of Lemma \ref{sansuu}. Essentially this is a ``Cauchy formula''
for the dual bases $\{\schub_\lmb\}$ and $\{\schub_{\rho-\mu}(x_1^{-1}, \ldots, x_n^{-1})\prod(x_i-x_j)\}$ appeared in Lemma \ref{dualbasislem}, 
but since we are dealing with an infinite-dimensional space a careful justification is needed. 
Let $c_{\alpha\beta}$ be the coefficient of $x^\alpha y^\beta$ in $\sum_{\kap \in \ZZ^n} \schub_{\rho-\kap}(x)\schub_\kap(y)$. 
We observe that if $c_{\rho-\mu, \nu} \neq 0$, then there exists some $\kap$ such that
$\rho-\mu \rtgeq \rho-\kap$ and $\nu \rtgeq \kap$, 
and so $\nu \rtgeq \kap \rtgeq \mu$. 
Thus $c_{\rho-\mu, \nu} = 0$ for $\nu \rtngeq \mu$. 
Using this as the base case, if we show $\sum_{g \in S_n} \sgn(g) c_{\rho-\mu, \nu-\rho+g\rho}=\delta_{\mu\nu}$, 
then we can show $c_{\rho-\mu, \nu}=\dim \ualg(\nplus)_{\nu-\mu}$ by induction on $\nu$
since $\sum_\kap \dim \ualg(\nplus)_\kap x^\kap = \prod_{i<j} (1-x_ix_j^{-1})^{-1}$ and $\prod_{i<j} (1-x_ix_j^{-1}) = \sum_{g \in S_n} \sgn(g) x^{\rho-g\rho}$. 
We show below the equivalent claim $\sum_{g \in S_n} \sgn(g) c_{\alpha, \beta+g\rho}=\delta_{\alpha,-\beta}$. 

Since $c_{\alpha,\beta+g\rho}=c_{\alpha+k\one,\beta+g\rho-k\one}$, we may assume that $-\beta \in \nonneg^n$. 
We may further assume, by replacing $\alpha$ and $\beta$ by $\alpha+k\one$ and $\beta-k\one$ for a sufficiently large $k$, that if $\kap \in \ZZ^n$ satisfies $\alpha \rtgeq \kap \rtgeq -\beta+\rho-g\rho$ for some $g \in S_n$ then $\kap \in \nonneg^n$ (this is possible by the remark at the end of Section \ref{schubmod}). 
Also it is sufficient to consider the case $|\alpha|=-|\beta|$. 
Let $d=|\alpha|$. Let $V$ be the space of all (ordinary) polynomials in $x_1, \ldots, x_n$ which are homogeneous of degree $d$. 
Equip $V$ with a bilinear form $\langle x^\sigma, x^\tau \rangle= \delta_{\sigma\tau}$. 
Then by Lemma \ref{dualbasislem} the bases $\{\schub_\kap : \kap \in \nonneg^n, |\kap|=d\}$ and 
$\{[\schub_{\rho-\kap}(x_1^{-1}, \ldots, x_n^{-1})\prod_{1 \leq i<j \leq n}(x_i-x_j)] : \kap \in \nonneg^n, |\kap|=d\}$ of $V$ are dual of each other; here for $f \in \ZZ[x_1^{\pm 1}, \ldots, x_n^{\pm 1}]$, $[f]$ is the sum of all terms in $f$ which do not contain any negative powers of $x_1, \ldots, x_n$. 
Thus we have
\begin{align*}
\sum_{\substack{\gamma \in \nonneg^n \\ |\gamma|=d}}x^\gamma y^\gamma
&\equiv
\sum_{\substack{\kap \in \nonneg^n \\ |\kap|=d}} \schub_\kap(x_1, \ldots, x_n) \schub_{\rho-\kap}(y_1^{-1}, \ldots, y_n^{-1})\prod_{1 \leq i<j \leq n}(y_i-y_j) \\
&= \left(\sum_{\substack{\kap \in \nonneg^n \\ |\kap|=d}} \schub_\kap(x_1, \ldots, x_n) \schub_{\rho-\kap}(y_1^{-1}, \ldots, y_n^{-1}) \right) \left( \sum_{g \in S_n} \sgn(g)y^{g\rho} \right) \quad \cdots (*)
\end{align*}
modulo terms containing some negative powers of some $y_i$ 
(note that for any finite-dimensional vector space $V$, 
the sum $\sum \phi_i \otimes \phi_i^* \in V \otimes V^*$ does not depend on the choice of dual bases $\{\phi_i\} \subset V, \{\phi_i^*\} \subset V^*$). 
Since $-\beta \in \nonneg^n$, the coefficient of $x^{\alpha}y^{-\beta}$ is equal for both side. The coefficient for the LHS is $\delta_{\alpha,-\beta}$. 
Moreover, if $\kap \in \ZZ^n$ and
$\schub_\kap(x_1, \ldots, x_n) \schub_{\rho-\kap}(y_1^{-1}, \ldots, y_n^{-1})$
contains some monomial of the form $x^{\alpha}y^{-\beta-g\rho}$ ($g \in S_n$) with nonzero coefficients, 
then such $\kap$ indeed satisfies $\kap \in \nonneg^n$ and thus appears in the first sum in $(*)$ above, 
since such $\kap$ must satisfy $\alpha \rtgeq \kap$ and $\beta+g\rho \rtgeq \rho-\kap$. 
So the coefficient of $x^{\alpha}y^{-\beta}$ in the RHS is the same as the coefficient of $x^{\alpha}y^{-\beta}$ in 
$\left( \sum_{\substack{\kap \in \ZZ^n \\ |\kap|=d}} \schub_\kap(x_1, \ldots, x_n) \schub_{\rho-\kap}(y_1^{-1}, \ldots, y_n^{-1})\right) \left( \sum_{g \in S_n} \sgn(g)y^{g\rho} \right)$. 
Since this coefficient is $\sum_{g \in S_n} \sgn(g) c_{\alpha, \beta+g\rho}$ we are done. 
\end{proof}
\begin{rem}
This proof, together with some results from the previous section, 
in fact shows that $\catc_{\leq \lmb}$ can be equipped with a structure of highest-weight category (\cite{CPS})
whose standards and costandards are $\smod_{\mu}$ ($\mu \leq \lmb$) and $\smod_{\rho-\nu}^* \otimes K_\rho$ ($\nu \leq \lmb$) respectively. 
The results in the next section is then a standard argument in the theory of highest-weight categories. 
I would like to thank Katsuyuki Naoi for giving the author this information. 

\end{rem}
From Lemma \ref{sugoilem}, we obtain the following corollary. 
This can be seen as an analog of ``Strong form of Polo's theorem'' (\cite[Theorem 3.2.2]{vdK}) for KP modules.
\begin{cor}
For $\lmb \in \ZZ^n$, $\mu, \nu \leq \lmb$ and $i \geq 1$, $\Ext^i_{\leq \lmb}(\smod_\mu, \smod_{\rho-\nu}^* \otimes K_\rho)=0$. 
\label{ext2}
\end{cor}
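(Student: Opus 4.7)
The plan is to reduce the problem to a category small enough that either $\smod_\mu$ becomes projective or $\smod_{\rho-\nu}^* \otimes K_\rho$ becomes injective, and then quote Proposition \ref{projinj}. The key engine is Lemma \ref{sugoilem}, which lets us move between $\catc_{\leq \lmb}$ and any smaller $\catc_{\leq \lmb'}$ (with $\lmb \geq \lmb'$) containing both modules, without changing Ext.

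First I would observe, using Lemma \ref{linord}, that the set $\{\kap : \kap \leq \lmb\}$ is linearly ordered, so either $\mu \leq \nu$ or $\nu \leq \mu$. By Proposition \ref{projinj}, $\smod_\mu$ lives in $\catc_{\leq \mu}$ and $\smod_{\rho-\nu}^* \otimes K_\rho$ lives in $\catc_{\leq \nu}$. Thus in either case both modules lie in $\catc_{\leq \lmb'}$ for $\lmb' = \max(\mu,\nu)$, and this $\lmb'$ satisfies $\lmb' \leq \lmb$.

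Next, apply Lemma \ref{sugoilem} with this $\lmb'$ to obtain
\[
\Ext^i_{\leq \lmb}(\smod_\mu, \smod_{\rho-\nu}^* \otimes K_\rho) \cong \Ext^i_{\leq \lmb'}(\smod_\mu, \smod_{\rho-\nu}^* \otimes K_\rho).
\]
Now split into the two cases. If $\nu \leq \mu$, then $\lmb' = \mu$ and $\smod_\mu$ is projective in $\catc_{\leq \mu}$ by Proposition \ref{projinj}, so the right-hand side vanishes for $i \geq 1$. If $\mu \leq \nu$, then $\lmb' = \nu$ and $\smod_{\rho-\nu}^* \otimes K_\rho$ is injective in $\catc_{\leq \nu}$ by the same proposition, so again the right-hand side vanishes for $i \geq 1$.

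There is no real obstacle here beyond correctly assembling the pieces; the entire content has been absorbed into Proposition \ref{projinj} and Lemma \ref{sugoilem}. The only thing to be careful about is verifying that the modules actually lie in the smaller category before invoking Lemma \ref{sugoilem}, which is exactly what the containment statement of Proposition \ref{projinj} provides.
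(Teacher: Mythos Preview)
Your proposal is correct and essentially identical to the paper's own proof: reduce via Lemma \ref{sugoilem} to $\catc_{\leq \max\{\mu,\nu\}}$, then use projectivity of $\smod_\mu$ when $\mu \geq \nu$ and injectivity of $\smod_{\rho-\nu}^* \otimes K_\rho$ when $\nu \geq \mu$, both from Proposition \ref{projinj}. The only difference is that you spell out the use of Lemma \ref{linord} and the containment check more explicitly than the paper does.
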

\begin{proof}
By Lemma \ref{sugoilem}, it suffices to prove $\Ext^i_{\leq \max\{\mu,\nu\}}(\smod_\mu, \smod_{\rho-\nu}^* \otimes K_\rho)=0$. 
If $\mu \geq \nu$, this follows from the projectivity of $\smod_\mu \in \catc_{\leq \mu}$
since $\smod_{\rho-\nu}^* \otimes K_\rho \in \catc_{\leq \mu}$. 
Otherwise it follows from the injectivity of $\smod_{\rho-\nu}^* \otimes K_\rho \in \catc_{\leq \nu}$ since $\smod_\mu \in \catc_{\leq \nu}$. 
\end{proof}

\section{Existence of KP filtrations}
\label{filtr}
Using the results obtained so far, 
we can obtain a criterion for a module to have a KP filtration, 
using the similar argument from \cite[\S 3]{vdK}. 
Hereafter, $\Ext^i(M, N)$ means $\Ext^i_{\leq \lmb}(M, N)$ for a suitable $\lmb$ (by Lemma \ref{sugoilem} this does not depend on the choice of $\lmb$). 
\begin{thm}
Let $\lmb \in \ZZ^n$, $M \in \catc_{\leq \lmb}$ and assume that $\Ext^1(M, \smod_{\rho-\mu}^* \otimes K_\rho)=0$ for all $\mu \leq \lmb$. 
Then $M$ has a filtration such that each of its subquotients is isomorphic to some $\smod_\nu$ $(\nu \leq \lmb)$.
\label{filtr_thm}
\end{thm}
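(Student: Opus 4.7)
The plan is to proceed by induction on $\dim M$, which is finite since $\{\mu : \mu \leq \lmb\}$ is finite by Lemma \ref{linord}. The base case $M = 0$ is trivial, so assume $M \neq 0$. Write $\nabla_\mu := \smod_{\rho-\mu}^* \otimes K_\rho$ for brevity; Proposition \ref{projinj} gives that $\nabla_\mu$ is injective in $\catc_{\leq \mu}$, and a direct computation using the lowest-weight-vector structure of $\smod_{\rho-\mu}$ shows $\Hom(K_\sigma, \nabla_\mu) = \delta_{\sigma\mu} K$ for simple $K_\sigma$; by exactness and length induction this implies $\dim \Hom(A, \nabla_\mu) = \dim A_\mu$ for any $A \in \catc_{\leq \mu}$. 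Since $\leq$ restricts to a total order on $\{\mu : \mu \leq \lmb\}$ (Lemma \ref{linord}), let $\nu$ be the maximum of $\mathrm{supp}(M)$, so $M \in \catc_{\leq \nu}$. Set $k := \dim M_\nu \geq 1$, and let $M^+ \subset M$ be the submodule generated by $M_\nu$; the projective cover property of $\smod_\nu$ in $\catc_{\leq \nu}$ (Proposition \ref{projinj}) then gives a surjection $\varphi : \smod_\nu^{\oplus k} \surj M^+$.

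The first key step is to show $\Ext^1(M/M^+, \nabla_\mu) = 0$ for all $\mu \leq \lmb$. From the long exact sequence of $0 \to M^+ \to M \to M/M^+ \to 0$ combined with the hypothesis $\Ext^1(M, \nabla_\mu) = 0$, this reduces to showing that the restriction $\Hom(M, \nabla_\mu) \to \Hom(M^+, \nabla_\mu)$ is surjective. By totality of $\leq$, one of $\mu = \nu$, $\mu > \nu$, $\mu < \nu$ holds. For $\mu = \nu$ both Hom-spaces have dimension $k$ and the restriction is injective (its kernel $\Hom(M/M^+, \nabla_\nu)$ vanishes by the dimension formula), hence an isomorphism. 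For $\mu > \nu$ the dimension formula gives both spaces zero (since $M_\mu = (M^+)_\mu = 0$). For $\mu < \nu$ one exploits that $M^+$ is generated by $M_\nu$ while $(\nabla_\mu)_\nu = 0$ (as $\nu \not\leq \mu$): any $\phi : M^+ \to \nabla_\mu$ must kill $M_\nu$ and therefore kills all of $M^+$, so $\Hom(M^+, \nabla_\mu) = 0$.

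If $M^+ \neq M$, then $\dim(M/M^+) < \dim M$ and by induction $M/M^+$ carries a KP filtration; Corollary \ref{ext2} then gives $\Ext^2(M/M^+, \nabla_\mu) = 0$ for all $\mu$, and the long exact sequence forces $\Ext^1(M^+, \nabla_\mu) = 0$ (vacuously so if $M^+ = M$). To conclude $\varphi$ is an isomorphism, let $K := \ker \varphi$; since $\varphi$ is bijective on weight-$\nu$ parts, $K \in \catc_{<\nu}$. If $K \neq 0$, pick $\mu$ maximal in $\mathrm{supp}(K)$; then $\mu < \nu$, $K \in \catc_{\leq \mu}$, and $K_\mu \neq 0$. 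Applying $\Hom(-, \nabla_\mu)$ to $0 \to K \to \smod_\nu^{\oplus k} \to M^+ \to 0$ together with $\Hom(\smod_\nu, \nabla_\mu) = 0$ (same reasoning as above, since $\smod_\nu$ is generated by its weight-$\nu$ vector) and $\Ext^1(\smod_\nu, \nabla_\mu) = 0$ (Corollary \ref{ext2}) yields $\Ext^1(M^+, \nabla_\mu) \cong \Hom(K, \nabla_\mu)$ of dimension $\dim K_\mu > 0$, contradicting what was just established. Hence $K = 0$, $M^+ \cong \smod_\nu^{\oplus k}$, and concatenating its trivial KP filtration with the inductive one for $M/M^+$ yields the required KP filtration of $M$.

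The main obstacle is the first step's case $\mu < \nu$, which crucially exploits the generation of $M^+$ by $M_\nu$ together with the vanishing $(\nabla_\mu)_\nu = 0$; once this is in hand, the remaining arguments are standard long-exact-sequence bookkeeping, essentially the argument used for standardly-filtered modules in highest-weight categories, as the author indicates in the remark following Corollary \ref{ext2}.
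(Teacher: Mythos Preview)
Your argument is correct and uses the same ingredients as the paper (Proposition~\ref{projinj}, Corollary~\ref{ext2}, and the same weight/generation considerations), but the induction is organized differently. The paper fixes $M$ and runs an induction on the layer index $i$ in the quotient filtration $M=M_r\surj\cdots\surj M_0=0$, showing at each step that $\Ker(M_i\surj M_{i-1})$ is \emph{projective} in $\catc_{\leq\nu^{(i)}}$ by proving $\Ext^1(\Ker(M_i\surj M_{i-1}),K_\mu)=0$ for all $\mu\leq\nu^{(i)}$; this requires an extra passage from $\Ext^1(-,\nabla_\mu)=0$ to $\Ext^1(-,K_\mu)=0$ via the socle filtration of $\nabla_\mu$. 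You instead induct on $\dim M$, peel off the top layer $M^+$ first, feed $M/M^+$ back into the induction, and then identify $M^+$ with $\smod_\nu^{\oplus k}$ by analyzing the kernel $K$ of the covering map directly and deriving a contradiction from $K\neq 0$. Your route avoids the $K_\mu$ step and the dimension identity $\dim\Hom(A,\nabla_\mu)=\dim A_\mu$ for $A\in\catc_{\leq\mu}$ cleanly handles the $\mu=\nu$ case; the paper's route, on the other hand, produces the entire filtration in one pass without re-invoking the theorem on the quotient. Both are standard variants of the highest-weight-category argument alluded to in the remark after Corollary~\ref{ext2}.
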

Note that the converse obviously holds since $\Ext^1(\smod_\nu, \smod_{\rho-\mu}^* \otimes K_\rho)=0$. 
\begin{proof}
Let $\{\nu : \nu \leq \lmb\} = \{\nu^{(1)}<\nu^{(2)}<\cdots<\nu^{(r)}\}$. 
Let $M_i$ be the largest quotient of $M$ whose weights are in $\{\nu^{(1)},\ldots, \nu^{(i)}\}$ (so $M_0=0$ and $M_r = M$). 
By definition, we have a natural surjection $M_i \surj M_j$ for $i>j$. 
We show that $\Ker(M_i \surj M_{i-1})$ is a direct sum of some copies of $\smod_\nu$ by the induction on $i$. 
This will show that $M=M_r \surj M_{r-1} \surj \cdots \surj M_0=0$ gives a quotient filtration with desired property. 

Fix $i$ and let $\nu=\nu^{(i)}$. 
It is sufficient to show that $\Ker(M_i \surj M_{i-1})$ is the projective cover of its $\nu$-weight space $\Ker(M_i \surj M_{i-1})_\nu$ in $\catc_{\leq \nu}$, 
since $\smod_\nu$ is the projective cover of $K_\nu$ in $\catc_{\leq \nu}$.
Since $\Ker(M_i \surj M_{i-1})$ is generated by $\Ker(M_i \surj M_{i-1})_\nu$, 
it suffices to show that $\Ker(M_i \surj M_{i-1})$ is projective in $\catc_{\leq \nu}$, 
that is, $\Ext^1(\Ker(M_i \surj M_{i-1}),K_\mu)=0$ for all $\mu \leq \nu$. 

Let $\mu \leq \nu$. We have an exact sequence $\Ext^1(M, \smod_{\rho-\mu}^* \otimes K_\rho) \mor \Ext^1(\Ker(M \surj M_{i-1}), \smod_{\rho-\mu}^* \otimes K_\rho) \mor \Ext^2(M_{i-1},\smod_{\rho-\mu}^* \otimes K_\rho)$. 
Here $\Ext^1(M, \smod_{\rho-\mu}^* \otimes K_\rho)=0$ by the hypothesis. 
Moreover, $\Ext^2(M_{i-1}, \smod_{\rho-\mu}^* \otimes K_\rho)=0$ by Corollary \ref{ext2}, since $M_{i-1}$ has a filtration
by modules $\smod_\kappa$ ($\kappa < \nu$) by the induction hypothesis. 
Therefore $\Ext^1(\Ker(M \surj M_{i-1}), \smod_{\rho-\mu}^* \otimes K_\rho)=0$. 

We have an exact sequence $\Hom(\Ker(M \surj M_i), \smod_{\rho-\mu}^* \otimes K_\rho) \mor \Ext^1(\Ker(M_i \surj M_{i-1}), \smod_{\rho-\mu}^* \otimes K_\rho) \mor \Ext^1(\Ker(M \surj M_{i-1}), \smod_{\rho-\mu}^* \otimes K_\rho)=0$. 
But here $\Hom(\Ker(M \surj M_i), \smod_{\rho-\mu}^* \otimes K_\rho)=0$, since the weights of $\smod_{\rho-\mu}^* \otimes K_\rho$ are all less than or equal to $\mu$ and therefore $\leq \nu$, 
while $\Ker(M \surj M_i)$ is generated by the elements whose weights are $>\nu$.  
Therefore $\Ext^1(\Ker(M_i \surj M_{i-1}), \smod_{\rho-\mu}^* \otimes K_\rho)=0$. 

We have an exact sequence $\Hom(\Ker(M_i \surj M_{i-1}), (\smod_{\rho-\mu}^* \otimes K_\rho)/K_\mu) \mor \Ext^1(\Ker(M_i \surj M_{i-1}), K_\mu) \mor \Ext^1(\Ker(M_i \surj M_{i-1}), \smod_{\rho-\mu}^* \otimes K_\rho)=0$. 
But since the weights of $(\smod_{\rho-\mu}^* \otimes K_\rho)/K_\mu$ are strictly less than $\mu$ and thus $< \nu$ while $\Ker(M_i \surj M_{i-1})$ is generated by its $\nu$-weight space, $\Hom(\Ker(M_i \surj M_{i-1}), (\smod_{\rho-\mu}^* \otimes K_\rho)/K_\mu)=0$.  
So $\Ext^1(\Ker(M_i \surj M_{i-1}), K_\mu)=0$ and we are done. 
\end{proof}

Another criterion for filtration can be also derived: 
\begin{thm}
Let $\lmb \in \ZZ^n$ and $M \in \catc_{\leq \lmb}$. 
Then $\ch(M) \leq \sum_{\nu \leq \lmb} \dim_K(\Hom(M,\smod_{\rho-\nu}^* \otimes K_\rho)) \schub_\nu$, 
and the equality holds if and only if $M$ has a filtration such that each of its subquotients is isomorphic to some $\smod_\nu$ $(\nu \leq \lmb)$. Here $\sum a_\alpha x^\alpha \leq \sum b_\alpha x^\alpha$ is defined as $a_\alpha \leq b_\alpha$ $(\forall \alpha)$. 
\label{filtr_char}
\end{thm}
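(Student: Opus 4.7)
The plan is to prove the inequality and the characterization of equality jointly, by induction on the size $r$ of the finite linearly ordered set $\{\nu : \nu \leq \lmb\}$ provided by Lemma \ref{linord}; write its elements as $\nu^{(1)} < \cdots < \nu^{(r)} = \lmb$ and set $T_\nu := \smod_{\rho-\nu}^* \otimes K_\rho$. The key preliminary fact is that $\dim \Hom(\smod_\mu, T_\nu) = \delta_{\mu\nu}$ for all $\mu, \nu \leq \lmb$: the socle of $T_\nu$ is $K_\nu$ (dual to the head $K_{\rho-\nu}$ of $\smod_{\rho-\nu}$), so a nonzero map $\smod_\mu \to T_\nu$ forces both $(T_\nu)_\mu \ne 0$ and $(\smod_\mu)_\nu \ne 0$, which by Proposition \ref{projinj}(1) gives $\mu = \nu$; the diagonal case is immediate from the projectivity of $\smod_\nu$. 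Combined with $\Ext^i(\smod_\mu, T_\nu) = 0$ from Corollary \ref{ext2}, a routine induction on the length of a KP filtration yields the ``if'' direction of the equivalence and shows that $\dim \Hom(M, T_\nu)$ counts the multiplicity of $\smod_\nu$ in any KP filtration of $M$.

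For the inequality and the ``only if'' direction, I induct on $r$. If $M_\lmb = 0$ (which covers $r = 1$) then $M \in \catc_{\leq \nu^{(r-1)}}$ and the induction hypothesis applies directly, with $\dim \Hom(M, T_\lmb) = 0$ by the socle argument above. Otherwise let $c := \dim M_\lmb$ and $N := \ualg(\borel) \cdot M_\lmb$. The natural map $P_\lmb^{\oplus c} \to N$ sending the generators to a basis of $M_\lmb$ kills every weight space $(P_\lmb)_\mu$ with $\mu \not\leq \lmb$ (since $M_\mu = 0$ there), hence descends to a surjection $\smod_\lmb^{\oplus c} = (P_\lmb^{\leq \lmb})^{\oplus c} \twoheadrightarrow N$ and so $\ch(N) \leq c\,\schub_\lmb$. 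Moreover, the duality $M \mapsto M^* \otimes K_\rho$ between $\catc_{\leq \lmb}$ and $\catc_{\leq' \rho-\lmb}^{\mathrm{op}}$, together with the description (from the discussion following Proposition \ref{projinj}) of $\smod_{\rho-\lmb}$ as the projective cover of $K_{\rho-\lmb}$ in $\catc_{\leq' \rho-\lmb}$, yields $\Hom(M, T_\lmb) \cong (M^* \otimes K_\rho)_{\rho-\lmb} \cong (M_\lmb)^*$, so $\dim \Hom(M, T_\lmb) = c$.

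Since $(M/N)_\lmb = 0$, the inductive hypothesis applies to $M/N \in \catc_{\leq \nu^{(r-1)}}$, and the surjection $M \twoheadrightarrow M/N$ gives $\dim \Hom(M/N, T_\nu) \leq \dim \Hom(M, T_\nu)$ for each $\nu < \lmb$. Assembling,
\[
\ch(M) = \ch(N) + \ch(M/N) \leq c\,\schub_\lmb + \sum_{\nu < \lmb} \dim \Hom(M, T_\nu)\,\schub_\nu = \sum_{\nu \leq \lmb} \dim \Hom(M, T_\nu)\,\schub_\nu,
\]
which proves the inequality. In the equality case, the linear independence of the Schubert polynomials forces each intermediate bound to be an equality: $\smod_\lmb^{\oplus c} \twoheadrightarrow N$ must be an isomorphism (as both sides have character $c\,\schub_\lmb$), and $M/N$ must have a KP filtration by induction; splicing these produces a KP filtration of $M$. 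The main obstacle I anticipate is verifying the identification $\dim \Hom(M, T_\lmb) = \dim M_\lmb$ cleanly via the two dualities; once in hand, everything else is a routine diagram chase.
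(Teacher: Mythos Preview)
Your proof is correct and follows essentially the same approach as the paper's. Both arguments build the same quotient filtration of $M$: you peel off the submodule $N = \ualg(\borel)\cdot M_\lmb$ (which is exactly the paper's $\Ker(M_r \surj M_{r-1})$) and recurse, while the paper writes down all layers $\Ker(M_i \surj M_{i-1})$ at once; in both cases the key computation is that the surjection $\smod_{\nu^{(i)}}^{\oplus d_i} \surj \Ker(M_i \surj M_{i-1})$ gives the coefficient bound, and the identification $d_i = \dim\Hom(M,T_{\nu^{(i)}})$ comes from the duality $M \mapsto M^*\otimes K_\rho$ together with $\smod_{\rho-\nu} \cong P_{\rho-\nu}^{\leq' \rho-\nu}$.

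Two small points of wording, neither affecting correctness: in the equality analysis, what you actually use is not linear independence of the $\schub_\nu$ but simply that two coefficient-wise inequalities summing to an equality are each equalities (so $\ch(N)=c\,\schub_\lmb$ and the rest is forced); and in the diagonal case $\dim\Hom(\smod_\nu,T_\nu)=1$, the clean justification is the duality $\Hom(\smod_\nu,T_\nu)\cong\Hom(\smod_{\rho-\nu},\smod_\nu^*\otimes K_\rho)\cong(\smod_\nu^*\otimes K_\rho)_{\rho-\nu}\cong K$, i.e.\ the projective-cover property of $\smod_{\rho-\nu}$ rather than of $\smod_\nu$ itself. Your proof is otherwise slightly more self-contained than the paper's on the ``if'' direction, since you establish $\dim\Hom(\smod_\mu,T_\nu)=\delta_{\mu\nu}$ directly rather than citing the proof of Theorem~\ref{filtr_thm}.
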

\begin{proof}
Let $\{\nu : \nu \leq \lmb\} = \{\nu^{(1)}<\nu^{(2)}<\cdots<\nu^{(r)}\}$. 
By the proof of Theorem \ref{filtr_thm}, $M$ has a desired filtration if and only if $\Ker(M_i \surj M_{i-1})$ is a direct sum of some copies of $\smod_{\nu^{(i)}}$, 
where $M_i$ is the largest quotient of $M$ whose weight are in 
$\{ \nu^{(1)}, \ldots, \nu^{(i)}\}$. 

We have $\ch(M) = \sum_i \ch(\Ker(M_i \surj M_{i-1}))$. 
Since $\Ker(M_i \surj M_{i-1})$ is generated by its $\nu^{(i)}$-weight space $(M_i)_{\nu^{(i)}}$, 
if we let $d_i$ denote the dimension of this weight space, 
we have a surjection from $(P_{\nu^{(i)}}^{\leq \nu^{(i)}})^{\oplus d_i}$ to $\Ker(M_i \surj M_{i-1})$. 
We have seen in Proposition \ref{projinj} that $P_{\nu^{(i)}}^{\leq \nu^{(i)}} \cong \smod_{\nu^{(i)}}$. 
Thus $\ch(M) \leq \sum_i \dim((M_i)_{\nu^{(i)}})\schub_{\nu^{(i)}}$ and the equality holds when and only when each kernel is a direct sum of some copies of $\smod_{\nu^{(i)}}$, i.e. $M$ has a desired filtration. 

For each $i$, we have 
$\Hom(M_i, \smod_{\rho-\nu}^* \otimes K_\rho) \cong \Hom(\smod_{\rho-\nu}, M_i^* \otimes K_\rho)
\cong (M_i^* \otimes K_\rho)_{\rho-\nu}
\cong ((M_i)_{\nu})^*
$ where $\nu=\nu^{(i)}$, 
since $M_i^* \otimes K_\rho \in \catc_{\{\rho-\mu : \mu \leq \nu\}}=\catc_{\leq' \rho-\nu}$
and $\smod_{\rho-\nu}$ is the projective cover of $K_{\rho-\nu}$ in this category. 
Thus the theorem follows. 
\end{proof}

\section{Questions}
\label{questions}
\begin{Q}
For $\lmb, \mu \in \ZZ^n$, does $\smod_\lmb \otimes \smod_\mu$ have a KP filtration?
\label{tensor_q}
\end{Q}
By the criteria obtained above, this question is equivalent to ask:
\begin{itemize}
\item whether $\Ext^1(\smod_\lmb \otimes \smod_\mu, \smod_\nu^* \otimes K_\rho)=0$ or not, or
\item whether the dimension of $\Hom(\smod_\lmb \otimes \smod_\mu, \smod_{\rho-\nu}^* \otimes K_\rho)$
is equal to the coefficient of $\schub_\nu$
in the expansion of $\schub_\lmb\schub_\mu$ into a linear combination of Schubert polynomials. 
\end{itemize}

\begin{Q}
Let $s_\sigma$ denote the Schur functor corresponding to a partition $\sigma$ and let $\lmb \in \ZZ^n$. 
Then, does $s_\sigma(\smod_\lmb)$ have a KP filtration?
\label{pleth_q}
\end{Q}
As explained in the introduction, positive answer for this question implies that
the ``plethysm'' $s_\sigma[\schub_\lmb]$ is a positive sum of Schubert polynomials. 

We note the following connection between these two problems. 
\begin{prop}
Suppose that the answer to Question \ref{tensor_q} is yes. 
Then the answer to Question \ref{pleth_q} is yes. 
\end{prop}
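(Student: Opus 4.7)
The plan is to reduce the plethystic question to the tensor-product question via Schur--Weyl duality, using the filtration criteria established in Section~\ref{filtr}.

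First I would verify two closure properties of the class of weight $\borel$-modules admitting a KP filtration. Closure under extensions is immediate (concatenate filtrations). For closure under direct summands, I would invoke Theorem~\ref{filtr_thm}: if $M=M_1\oplus M_2$ has a KP filtration and lies in $\catc_{\leq\lmb}$, then $\Ext^1(M,\smod_{\rho-\mu}^*\otimes K_\rho)=0$ for every $\mu\leq\lmb$, and this Ext group splits across the direct summands. Hence $M_1$ (which lies in $\catc_{\leq\lmb}$ as well, being a submodule of $M$) satisfies the same vanishing, and Theorem~\ref{filtr_thm} yields a KP filtration of $M_1$.

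Next, granting that Question~\ref{tensor_q} has an affirmative answer, I would show that tensoring with any KP module preserves the property of admitting a KP filtration. Given a KP filtration $0=M_0\subset\cdots\subset M_r=M$ with $M_i/M_{i-1}\cong\smod_{\lmb^{(i)}}$, the sequence $0=M_0\otimes\smod_\lmb\subset\cdots\subset M_r\otimes\smod_\lmb=M\otimes\smod_\lmb$ has subquotients $\smod_{\lmb^{(i)}}\otimes\smod_\lmb$, each of which admits a KP filtration by the assumed answer to Question~\ref{tensor_q}; refining yields a KP filtration of $M\otimes\smod_\lmb$. By induction on $d$, the tensor power $\smod_\lmb^{\otimes d}$ then admits a KP filtration for every $d\geq 1$.

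Finally, since $K$ has characteristic zero, $KS_d$ is semisimple, and the Schur--Weyl decomposition
\[
\smod_\lmb^{\otimes d}\;\cong\;\bigoplus_{\sigma\vdash d} s_\sigma(\smod_\lmb)\otimes M_\sigma
\]
holds as $\borel$-modules, where $M_\sigma$ is the Specht module carrying the trivial $\borel$-action and the decomposition respects $\borel$ because the $S_d$-action commutes with the diagonal action of $\borel$. Thus $s_\sigma(\smod_\lmb)$ appears as a $\borel$-direct summand of $\smod_\lmb^{\otimes|\sigma|}$, and the direct-summand closure established in the first step delivers the desired KP filtration on $s_\sigma(\smod_\lmb)$. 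The only step that might appear delicate is the passage to a direct summand, but the Ext-theoretic characterization in Theorem~\ref{filtr_thm} reduces it to the standard fact that $\Ext^1(-,N)$ sends direct sums to direct sums; the remainder is essentially bookkeeping.
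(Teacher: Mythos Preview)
Your proposal is correct and follows essentially the same route as the paper: both arguments build a KP filtration on the tensor power $\smod_\lmb^{\otimes d}$ from the assumed answer to Question~\ref{tensor_q}, observe that $s_\sigma(\smod_\lmb)$ is a $\borel$-direct summand of this tensor power (via Schur--Weyl in characteristic zero), and then use the $\Ext^1$-vanishing criterion of Theorem~\ref{filtr_thm} to pass the KP-filtration property to the summand. The paper is more terse, but the logical structure is identical; your explicit isolation of the closure-under-direct-summands step is just a cleaner packaging of the same idea.
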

\begin{proof}
By iteratively using \ref{tensor_q}, we see that $\smod_{\lmb^{(1)}} \otimes \cdots \otimes \smod_{\lmb^{(r)}}$ has a KP filtration
for any $\lmb^{(1)}, \ldots, \lmb^{(r)} \in \ZZ^n$. 
Especially, $(\smod_\lmb)^{\otimes k}$ has a KP filtration for any $\lmb$ and $k$. 
Therefore $\Ext^1((\smod_\lmb)^{\otimes k}, \smod_\nu^* \otimes K_\rho)=0$ for any $\nu$. 
Since $s_\sigma(\smod_\lmb)$ is a direct sum factor of $(\smod_\lmb)^{\otimes |\sigma|}$, 
$\Ext^1(s_\sigma(\smod_\lmb), \smod_\nu^* \otimes K_\rho)=0$. 
Thus $s_\sigma(\smod_\lmb)$ has a KP filtration by Theorem \ref{filtr_thm}. 
\end{proof}

\vspace{2ex}
\noindent\textbf{Note.} in a subsequent work the author gave positive answers to both of the questions above: see \cite{W2}. 
\nocite{*}

\bibliographystyle{plain}

\end{document}